\title{Trace theory for Sobolev mappings into a manifold}
\author{Petru Mironescu}
\address[P. Mironescu]{
Universit\'e de Lyon\\
Universit\'e Lyon 1\\
CNRS UMR 5208 Institut Camille Jordan\\
43 Boulevard du 11 Novembre 1918\\
69622 Villeurbanne Cedex\\
France}
\address[P. Mironescu]{Simion Stoilow Institute of Mathematics of the Romanian Academy\\
 Calea Grivi\c tei 21\\
 010702 Bucure\c sti\\
 Rom\^ania
}
\thanks{This work has been initiated  during a long term visit  of P. Mironescu at the  Simion Stoilow Institute of Mathematics of the Romanian Academy; he thanks the Institute and the Centre Francophone en Math\'ematiques in Bucharest for their support on that occasion.}
\author{Jean Van Schaftingen}
\address[J. Van Schaftingen]{Universit\'e catholique de Louvain\\
Institut de Recherche en Math\'ematique et Physique\\
Chemin du Cyclotron 2 bte L7.01.01\\
1348 Louvain-la-Neuve\\
Belgium}
\email{mironescu@math.univ-lyon1.fr, Jean.VanSchaftingen@uclouvain.be}
\thanks{J. Van Schaftingen was supported by the Mandat d'Impulsion Scientifique F.4523.17, \enquote{Topological singularities of Sobolev maps} of the Fonds de la Recherche Scientifique--FNRS}
\newtheorem{theorem}{Theorem}
\newtheorem{proposition}{Proposition}[section]
\newtheorem{corollary}[proposition]{Corollary}
\theoremstyle{definition}
\newtheorem{openproblem}{Open problem}
\theoremstyle{remark}
\numberwithin{equation}{section}
\DeclarePairedDelimiter{\parens}{\lparen}{\rparen}
\DeclarePairedDelimiter{\abs}{\lvert}{\rvert}
\DeclarePairedDelimiter{\set}{\lbrace}{\rbrace}
\DeclarePairedDelimiter{\norm}{\lVert}{\rVert}
\DeclarePairedDelimiter{\floor}{\lfloor}{\rfloor}
\newcommand{\familyname}[1]{\textsc{#1}}
\newcommand{\dif}{\,\mathrm{d}}
\newcommand{\st}{;\, }
\newcommand{\manifold}[1]{\mathcal{#1}}
\newcommand{\Rset}{\mathbb{R}}
\newcommand{\Nset}{\mathbb{N}}
\newcommand{\Sset}{\mathbb{S}}
\newcommand{\Bset}{\mathbb{B}}
\newcommand{\Zset}{\mathbb{Z}}
\newcommand{\compose}{\,\circ\,}
\newcommand{\restr}[1]{\!\upharpoonright_{#1}}
\newcommand{\defeq}{\triangleq}
\newcommand{\Deriv}{\mathrm{D}}
\DeclareMathOperator{\trace}{tr}
\newcommand\footnoteref[1]{\protected@xdef\@thefnmark{\ref{#1}}\@footnotemark}
\begin{document}

\begin{abstract}
We review the current state of the art concerning the characterization of traces of the  spaces $W^{1, p} (\mathbb{B}^{m-1}\times (0,1), \mathcal{N})$ of Sobolev mappings with values  into a compact manifold $\mathcal{N}$.
In particular, we exhibit a new analytical obstruction to the extension, which occurs when $p < m$ is an integer and the homotopy group \(\pi_p (\mathcal{N})\) is non trivial. On the positive side, we prove the surjectivity of the trace operator when the fundamental group $\pi_1 (\mathcal{N})$ is finite and $\pi_2 (\mathcal{N}) \simeq \dotsb \simeq \pi_{\lfloor p - 1 \rfloor} (\mathcal{N}) \simeq \{0\}$.
We present several open problems connected to the extension problem.
\end{abstract}

\subjclass[2010]{46T10 (46E35, 58D15)}
\keywords{Trace spaces, fractional Sobolev spaces, homotopy groups, lifting of Sobolev mappings.}

\maketitle

\section{Introduction}

The classical trace theory characterizes the boundary values of functions in the linear Sobolev spaces
\(W^{1, p} (\Rset^{m - 1} \times (0, 1), \Rset)\), with \( m\ge 2\) and \(1 \le p < \infty\). These spaces  are  defined as
\begin{equation*}
    W^{1, p}
      (\Rset^{m - 1} \times (0, 1), \Rset)
  \defeq
  \set[\big]{
U: \Rset^{m - 1} \times (0, 1)\to \Rset
\st
U\in L^p \text{ and } DU\in L^p
}.
\end{equation*}

The characterization of the traces involves the \emph{fractional Sobolev--Slobodecki{\u \i} space} \(W^{1-\sfrac{1}{p},p}(\Rset^\ell, \Rset)\). Recall that, when \(0<s<1\), the fractional spaces \(W^{s,p}(\Rset^\ell, \Rset)\) are defined as
\begin{equation*}
W^{s, p}
(\Rset^\ell, \Rset)
\defeq
\set[]{
  u : \Rset^{\ell} \to \Rset
  \st u\in L^p \text{ and }\mathcal{E}^{s, p} (u) < \infty
},
\end{equation*}
where the \emph{fractional Gagliardo energy} \(\mathcal{E}^{s, p} (u)\) of a measurable function \(u :  \Rset^{\ell} \to \Rset\) is given by
\begin{equation*}
\mathcal{E}^{s, p} (u)
\defeq
\iint\limits_{\Rset^{\ell} \times \Rset^{\ell}}
\frac{\abs{u (y) - u (x)}^p}{\abs{y - x}^{\ell + sp}}\dif y \dif x.
\end{equation*}
The fractional spaces \(W^{s, p} (\Rset^\ell, \Rset)\) can also be characterized as interpolated spaces of \(L^p(\Rset^\ell, \Rset)\) and \(W^{1,p}(\Rset^\ell, \Rset)\) \cite{Lions_Peetre_1964}*{Théorème VI.2.1} (see also \cite{Adams_Fournier_2003}*{Theorem 7.39}).

The central result in classical trace theory, due to E.  \familyname{Gagliardo} \cite{Gagliardo_1957} (see also \citelist{\cite{diBenedetto_2016}*{\S 10.17--10.18 and Proposition 17.1}\cite{Mazya_2011}*{\S 10.1.1}}), asserts that, when \(p > 1\),  there exists a unique
linear continuous surjective trace operator
\(\trace_{\Rset^{m - 1} \times \{0\}} : W^{1, p} (\Rset^{m - 1} \times (0, 1), \Rset) \to W^{1 - \sfrac{1}{p}, p} (\Rset^{m - 1}, \Rset)\), extending the (pointwise) trace on \(\Rset^{m - 1} \times \{0\}\) of smooth maps \(U\in C^\infty(\Rset^{m-1}\times [0,1), \Rset)\cap W^{1,p}(\Rset^{m-1}\times (0,1), \Rset)\).
Moreover, the operator \(\trace_{\Rset^{m - 1} \times \{0\}}\) has a linear continuous right inverse. The harmonic extension (convolution with the Poisson kernel), the heat semigroup (convolution with the heat kernel)
or, more generally,  the  convolution with appropriate families of mollifiers are explicit examples of such  right inverses. For example, if \(u\in W^{1-\sfrac{1}{p},p}(\Rset^{m-1}, \Rset)\), then its harmonic extension \(U\) to \(\Rset^{m-1}\times (0,\infty)\), restricted to \(\Rset^{m-1}\times (0,\infty)\), is an extension of \(u\) in the sense that it belongs to \(W^{1,p}(\Rset^{m-1}\times (0,1), \Rset)\) and has trace \(u\) on \(\Rset\times \{0\}\).

When \(p = 1\), the trace operator is a \emph{linear continuous surjection} on \(L^1 (\Rset^{m  - 1})\) \cite{Gagliardo_1957} that has \emph{no linear continuous right inverse} (J. \familyname{Peetre} \cite{Peetre_1979}).

Trace theory has local versions, in which the whole Euclidean space \(\Rset^{m-1}\) is replaced by a Lipschitz domain. For simplicity, we focus on the case of the unit ball \(\Bset^{m-1}\). With \(1\le p<\infty\) and  \(0<s<1\), the adapted Sobolev spaces and fractional energies are
\begin{gather*}
    W^{1, p}
      (\Bset^{m - 1} \times (0, 1), \Rset)
  \defeq
  \set[\big]{
U: \Bset^{m - 1} \times (0, 1)\to \Rset
\st
U\in L^1_{\mathrm{loc}} \text{ and } DU\in L^p
},
\\
W^{s, p}
(\Bset^\ell, \Rset)
\defeq
\set[]{
  u : \Rset^{\ell} \to \Rset
  \st u\text{ is measurable and }\mathcal{E}^{s, p} (u) < \infty
},
\intertext{and}
\mathcal{E}^{s, p} (u)
\defeq
\iint\limits_{\Bset^{\ell} \times \Bset^{\ell}}
\frac{\abs{u (y) - u (x)}^p}{\abs{y - x}^{\ell + sp}}\dif y \dif x.
\end{gather*}

In this framework, when \(p>1\), the trace operator
\begin{equation*}
\trace_{\Bset^{m - 1} \times \{0\}} : W^{1, p} (\Bset^{m - 1} \times (0, 1), \Rset) \to W^{1 - \sfrac{1}{p}, p} (\Bset^{m - 1}, \Rset)
\end{equation*}
is a  linear continuous surjection that has a linear continuous right inverse. Again, explicit extensions of maps in \(W^{1 - \sfrac{1}{p}, p} (\Bset^{m - 1}, \Rset)\) can be  obtained via convolutions with appropriate mollifiers.

\medbreak
The previous considerations extend readily to the case where the target space  \(\Rset\) is replaced  by a finite-dimensional Euclidean space \(\Rset^\nu\), where \(\nu \in \Nset^*\).

\medbreak

When \(\manifold{N} \subset \Rset^\nu\) is an embedded compact Riemannian submanifold\footnote{By Nash's embedding theorem \cite{Nash_1956}, such an embedding exists for any abstract Riemannian manifold.}, we consider the corresponding Sobolev spaces of mappings into the manifold \(\manifold{N}\), defined, for \(1\le p<\infty\) and \(0<s<1\), as
\begin{gather*}
  \begin{split}
W^{1, p} (\Bset^{m - 1} \times (0, 1), \manifold{N})
\defeq
\set[]
{
  U \in W^{1, p} (&\Bset^{m - 1} \times (0, 1), \Rset^\nu)
  \st
  U \in \manifold{N}\\
  & \text{ almost everywhere in }
  \Bset^{m - 1} \times (0, 1)
}
\end{split}
\intertext{and}
W^{s, p} (\Bset^{\ell}, \manifold{N})
\defeq
\set[]
{
  u \in W^{s, p} (\Bset^{\ell})
  \st
  u \in \manifold{N}
  \text{ almost everywhere in }
  \Bset^{\ell}
}.
\end{gather*}

The classical linear theory readily implies  that
\begin{equation}
  \label{eq_saiSh5rei7chaecie}
  \trace_{\Bset^{m - 1} \times \{0\}}  \parens[\big]{W^{1, p} (\Bset^{m - 1} \times (0, 1), \manifold{N})}
\subseteq W^{1 - \sfrac{1}{p}, p} (\Bset^{m - 1}, \manifold{N})
\end{equation}
(with the convention \(W^{0,1}(\Bset^{m - 1}, \manifold{N})=L^1(\Bset^{m - 1}, \manifold{N})\)).

The basic question of the trace and extension theory for Sobolev mappings with values into manifolds is to determine whether
equality holds in the inclusion \eqref{eq_saiSh5rei7chaecie}; the linear trace theory merely provides an extension taking its values into the ambient Euclidean space  \(\Rset^\nu\) and the problem is to determine whether \emph{every} map \(u\in W^{1-\sfrac{1}{p},p}(\Bset^{m-1} , \manifold{N})\) has a \(W^{1,p}(\Bset^{m-1} \times (0, 1), \Rset^\nu)\) extension  with values into  \(\manifold{N}\).
If this holds, then \(W^{1-\sfrac{1}{p},p}(\Bset^{m-1} , \manifold{N})\) has the \emph{extension property}.

\medbreak
Let us start by noting a harmless condition in order to study the extension property: the manifold \({\manifold N}\) will be connected. Indeed, if \(U\in W^{1,1}_{\mathrm{loc}}(\Bset^{m-1}\times (0,1), \manifold{N})\), then the essential range of the map \(U\) is connected \cite{bmbook}*{Theorem 7.5}, and thus the mapping \(U\) takes values into a connected component of \(\manifold{N}\); therefore, so does its trace.
When \(p < 2\), there exists a map \(u \in W^{1 - \sfrac{1}{p}, p} (\Bset^{m - 1}, \manifold{N})\) that take constant values on smooth subsets of \(\Bset^{m - 1}\), and therefore \emph{we have to assume} that the manifold \(\manifold{N}\) is necessarily connected.
On the other hand when \(p \ge 2\), the essential range of any map \(u \in W^{1 - \sfrac{1}{p}, p} (\Bset^{m - 1}, \manifold{N})\) is connected and there is thus \emph{no loss of generality} to work with a connected target manifold \(\manifold{N}\).
We assume henceforth that the \emph{manifold \(\manifold{N}\) is connected}.

\medbreak
In the case of subcritical dimensions \(m \le p\), the answer to the trace and extension problem is positive.

\begin{theorem}
  \label{theorem_trace_subcritical}
If \(m \le p\), then \(W^{1-\sfrac{1}{p},p}(\Bset^{m-1} , \manifold{N})\) has the extension property.
\end{theorem}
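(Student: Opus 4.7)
The plan is to construct an \(\manifold{N}\)-valued Sobolev extension of \(u\) by post-composing a \emph{linear} extension of \(u\) with the nearest-point projection \(\Pi\colon\manifold{O}\to\manifold{N}\), defined on a tubular neighbourhood \(\manifold{O}\) of \(\manifold{N}\) in \(\Rset^\nu\). The analytic input that makes this work precisely when \(p\ge m\) is that the scaling \((1-\sfrac{1}{p})p\ge m-1\) places \(W^{1-\sfrac{1}{p},p}(\Bset^{m-1})\) at or above the \(\text{VMO}\) threshold, so that any representative of \(u\) has uniformly vanishing mean oscillation.

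First, I would apply the classical linear extension recalled in the excerpt to produce \(U\in W^{1,p}(\Bset^{m-1}\times(0,1),\Rset^\nu)\) with \(\trace_{\Bset^{m-1}\times\{0\}}U=u\)---for concreteness, the harmonic extension of a linear \(W^{1-\sfrac{1}{p},p}(\Rset^{m-1},\Rset^\nu)\)-extension of \(u\), or equivalently an averaging of the form \(U(x,t)=(u*\rho_t)(x)\) for a smooth compactly supported mollifier \((\rho_t)_{t>0}\). The heart of the argument will be the uniform closeness estimate
\[
  \lim_{t\to 0^+}\sup_{x\in\Bset^{m-1}}\dist\bigl(U(x,t),\manifold{N}\bigr)=0.
\]
Since \(u(y)\in\manifold{N}\) almost everywhere, one may control \(\dist(U(x,t),\manifold{N})\) by the mean oscillation of \(u\) over a ball of radius \(\simeq t\) centred at \(x\); I would then show this mean oscillation tends to zero uniformly in \(x\) by invoking the \(\text{VMO}\) embedding, which itself follows from the hypothesis \((1-\sfrac{1}{p})p\ge m-1\) via \familyname{Hölder}'s inequality applied to a dyadic decomposition of the Gagliardo integral \(\mathcal{E}^{1-\sfrac{1}{p},p}(u)\).

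With this estimate in hand, I would fix \(t_0>0\) so small that \(U(x,t)\in\manifold{O}\) for every \((x,t)\in\Bset^{m-1}\times(0,t_0)\), and then set \(V(x,t)\defeq\Pi(U(x,t))\) on this strip. Since \(\Pi\) is smooth on \(\manifold{O}\), the chain rule gives \(\lvert \Deriv V\rvert\le C\lvert \Deriv U\rvert\), so \(V\in W^{1,p}(\Bset^{m-1}\times(0,t_0),\manifold{N})\); since \(\Pi\) restricts to the identity on \(\manifold{N}\), continuity of the trace yields \(\trace_{\Bset^{m-1}\times\{0\}}V=\Pi\circ u=u\). Finally, a smooth vertical rescaling \((0,1)\to(0,t_0)\) would transport \(V\) to an \(\manifold{N}\)-valued map in \(W^{1,p}(\Bset^{m-1}\times(0,1),\manifold{N})\) with the same trace, yielding the desired extension.

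The main obstacle will be the critical case \(p=m\): when \(p>m\), the embedding \(W^{1-\sfrac{1}{p},p}(\Bset^{m-1})\hookrightarrow C^{0,\alpha}(\Bset^{m-1})\) makes the uniform closeness estimate immediate, but when \(p=m\) no pointwise modulus of continuity is available and one must extract a \emph{uniform} modulus of mean oscillation directly from the Gagliardo energy. This is the only place where the subcritical assumption \(m\le p\) is used in an essential way, and it is precisely what bypasses any homotopical obstruction on \(\manifold{N}\).
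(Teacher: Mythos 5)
Your proposal follows exactly the route the paper indicates for this theorem (the Schoen--Uhlenbeck/Brezis--Nirenberg argument as implemented by Bethuel and Demengel): a convolution extension of \(u\) lands uniformly in a tubular neighbourhood of \(\manifold{N}\) because \((1-\sfrac{1}{p})p=p-1\ge m-1\) places \(u\) in \(\mathrm{VMO}\), and one then composes with the nearest-point projection and rescales. The only detail to adjust is that the preliminary extension of \(u\) across \(\partial\Bset^{m-1}\) should itself be \(\manifold{N}\)-valued (e.g.\ by a bi-Lipschitz reflection onto a slightly larger ball) rather than merely \(\Rset^\nu\)-valued, so that the uniform closeness estimate also holds for points near the lateral boundary.
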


\Cref{theorem_trace_subcritical} is due to F. \familyname{Bethuel} and
F. \familyname{Demengel} \cite{Bethuel_Demengel_1995}*{Theorems 1 \& 2}. Its proof relies on the fact that, when \(m \ge p\), an extension by convolution of a map \(u\in W^{1-\sfrac{1}{p},p}(\Bset^{m-1}, \manifold{N})\) takes, in a neighborhood of \(\Bset^{m - 1} \times \{0\}\),  its values in a small tubular neighborhood of \(\manifold{N}\). This important observation has roots in the seminal work of  R. \familyname{Schoen} and K.  \familyname{Uhlenbeck} \citelist{\cite{Schoen_Uhlenbeck_1982}*{\S 3}\cite{Schoen_Uhlenbeck_1983}*{\S 4}} on \(H^1\) maps with values into manifolds; see also H. \familyname{Brezis} and L. \familyname{Nirenberg} \cite{Brezis_Nirenberg_1995}  for far-reaching consequences of properties of this type in connection with the degree theory for VMO maps with values into manifolds.

\medbreak
In higher dimensions \(m > p\),  the answer to the trace problem is also positive provided the integrability exponent \(p\) is small.

\begin{theorem}
  \label{theorem_trace_sufficient}
  If \(1 \le p < 2\), then \(W^{1-\sfrac{1}{p},p}(\Bset^{m-1} , \manifold{N})\) has the extension property.
 \end{theorem}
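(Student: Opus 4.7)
My approach would be to start from the classical linear extension of $u$ and correct it on a fine cubical grid so as to end up with values in $\manifold{N}$, the assumption $p<2$ entering as the sharp regularity condition for radial extensions on $2$-dimensional cells. First, by the classical (vector-valued) trace theory recalled above, $u$ has a linear extension $V\in W^{1,p}(\Bset^{m-1}\times (0,1),\Rset^\nu)$, obtained for instance by convolution with a smoothing kernel. A standard mean-value argument, controlled by the $W^{1,p}$-norm of $V$, shows that outside a thin \enquote{bad} set, $V$ takes values in a fixed tubular neighbourhood $\manifold{N}_\delta$ of $\manifold{N}$, on which the nearest-point retraction $\Pi\colon\manifold{N}_\delta\to\manifold{N}$ is smooth.

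I would then fix a cubical grid on $\Bset^{m-1}\times(0,1)$ of small side $\varepsilon$, translated in general position with respect to $V$ so that the trace of $V$ on each skeleton inherits the expected $W^{1,p}$ regularity (a Fubini selection). The map $U$ is constructed cell by cell, in order of increasing dimension: assign to each vertex a value of $\manifold{N}$ close to $V$ there; on each $1$-cell, connect the two endpoint values by a path in $\manifold{N}$, using the (assumed) path-connectedness of $\manifold{N}$ --- this is the only topological property of $\manifold{N}$ involved in the construction; on each $k$-cell with $k\geq 2$, either post-compose $V$ with $\Pi$ if $V$ is close to $\manifold{N}$ on the cell, or fill the cell from its boundary by a radial extension centred at an interior point, possibly introducing a point singularity there. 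Carrying this out in order of increasing $k$ guarantees compatibility on shared faces and produces a map $U\in W^{1,p}(\Bset^{m-1}\times (0,1),\manifold{N})$ with trace $u$ on $\Bset^{m-1}\times\{0\}$.

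The key estimate, and the place where $p<2$ enters sharply, is the $W^{1,p}$-energy of a radial extension on a bad $k$-cell of side $\varepsilon$: its gradient is of order $1/\abs{x}$, so its energy is of order $\varepsilon^{k-p}$, which is finite \emph{precisely} when $p<k$. Since the construction requires filling in cells of every dimension from $2$ up to $m$, the binding constraint is $p<2$, coming from the $2$-cells. The \emph{main obstacle} is then to control the sum of these local contributions over all bad cells and the several scales of the induction: this should be handled by a Whitney/capacity-type bound on the measure of the bad set in terms of the $W^{1,p}$-norm of $V$, combined with the geometric summability of the local bounds $\varepsilon^{k-p}$, which holds exactly under the hypothesis $p<2$.
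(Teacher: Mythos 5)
Your overall strategy is the right one: the paper does not reprove this statement but cites Hardt--Lin (Theorem 6.2), whose argument is precisely a cell-by-cell construction on a cubical decomposition, with vertices and edges filled using only connectedness of \(\manifold{N}\) and cells of dimension \(k\ge 2\) filled by homogeneous (radial) extension, the condition \(p<2\) entering exactly as you say through the finiteness of \(\int_{Q^k}\abs{x}^{-p}\) for \(k\ge 2\).

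There is, however, a genuine gap in your proposal: the use of a \emph{single fixed} grid scale \(\varepsilon\). The bad set where the linear extension \(V\) leaves the tubular neighbourhood \(\manifold{N}_\delta\) is not confined to a thin interior region; since \(u\) itself has only \(W^{1-\sfrac{1}{p},p}\) regularity with \((1-\sfrac1p)p=p-1<1\), the mollified extension \(V\) can stay far from \(\manifold{N}\) on sets accumulating at \(\Bset^{m-1}\times\{0\}\) at every scale. Consequently a grid of fixed side \(\varepsilon\) cannot work in the bottom layer of cubes: if a bad \(2\)-cell has an edge on \(\Bset^{m-1}\times\{0\}\), your radial filling would require \(u\) restricted to that edge to have finite \(W^{1,p}\) energy (which it does not), and if instead you replace \(V\) there by paths and radial extensions you change the boundary values, so the resulting \(U\) no longer has trace \(u\). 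The correct construction uses a boundary-adapted (Whitney/dyadic) decomposition in which the cubes in the slab \(\Bset^{m-1}\times(2^{-j-1},2^{-j})\) have side \(\sim 2^{-j}\); vertex values are local averages of \(u\), the differences between adjacent averages are controlled by the localized Gagliardo energy of \(u\) at scale \(2^{-j}\), the contributions \(\sim 2^{-j(k-p)}\) of the bad cells are summed as a geometric series over \(j\) (this is where \(p<2\) is used for summability, not merely finiteness on one cell), and one must then \emph{prove} that the multi-scale map so obtained converges to \(u\) in the trace sense as one approaches \(\Bset^{m-1}\times\{0\}\). This convergence-to-the-trace step is the heart of the theorem and is entirely absent from your sketch. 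A secondary point: for \(p=1\) the Gagliardo energy is replaced by \(\norm{u-\fint u}_{L^1}\) (cf.\ the footnote to the statement in the paper), so the quantitative control you invoke must be reformulated in that case.
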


 \cref{theorem_trace_sufficient} is due to is due to R. \familyname{Hardt} and F.H. \familyname{Lin} \cite{Hardt_Lin_1987}*{Theorem 6.2}.\footnote{Strictly speaking, the case \(p=1\), which is an exceptional case for trace theory, is not specifically considered in \cite{Hardt_Lin_1987}. However, \cref{theorem_trace_sufficient} with \(p=1\) and \cref{theorem_extension_estimates} \ref{it_quaes1viegheemahvieJohZ2} are proved exactly as the corresponding results for \(1<p<2\). The initial ingredient is the existence, for each measurable map \(u:\Bset^{m-1}\to{\manifold N}\), of some extension \(U\in W^{1,1}(\Bset^{m-1}\times (0,1), \Rset^\nu)\) such that
  \[
  \norm{\nabla U}_{L^1}\le C \norm[\bigg]{u-\fint_{\Bset^{m-1}}u}_{L^1}.
  \]}

In particular, when \(m = 2\), the whole range of integrability exponents \(1 \le p < \infty\) is covered by the combination of \cref{theorem_trace_subcritical} and \cref{theorem_trace_sufficient}.
A hint  to the absence of any topological condition beyond connectedness of the manifold \(\manifold{N}\) is the fact that, when \(0 < sp < 1\), the space \(W^{s, p} (\Bset^{m - 1}, \Rset^\nu)\)
contains characteristic functions of smooth sets and  hence topological obstructions cannot arise in these spaces. (A similar phenomenon arises for the lifting problem when \(0 < sp < 1\) \citelist{\cite{Bourgain_Brezis_Mironescu_2000}\cite{Bethuel_Chiron_2007}}.)

\medbreak
However, when \(2 \le p < m\), one encounters some obstructions in the extension problem.
A first  example is provided by the \emph{topological obstruction}.

\begin{theorem}
  \label{theorem_topological_obtsruction}
If \(2 \le p < m \) and if \(\pi_{\floor{p - 1}} (\manifold{N}) \not \simeq \{0\}\)\footnote{Here and in what follows, \(\floor{t} \in \Zset\) denotes the integer part of the real number \(t \in \Rset\).},
then \(W^{1 - \sfrac{1}{p}, p} (\Bset^{m - 1}, \manifold{N})\) does not have the extension property.
\end{theorem}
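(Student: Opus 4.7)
The plan is to construct, for each nontrivial class in \(\pi_{\floor{p-1}}(\manifold{N})\), a map \(u \in W^{1-\sfrac{1}{p},p}(\Bset^{m-1}, \manifold{N})\) whose topological singularity obstructs every \(W^{1,p}\)-extension into \(\manifold{N}\). Write \(k \defeq \floor{p-1}\); the hypotheses \(2 \le p < m\) force \(1 \le k\) and \(k+1 \le m-1\). Split \(\Rset^{m-1} = \Rset^{k+1} \times \Rset^{m-2-k}\) with coordinates \((x',x'')\), choose a smooth representative \(f \colon \Sset^k \to \manifold{N}\) of a nontrivial homotopy class, and set
\[
u(x',x'') \defeq f\parens[\big]{x'/\abs{x'}}.
\]

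First I would verify that \(u \in W^{1-\sfrac{1}{p},p}(\Bset^{m-1}, \manifold{N})\). By a Fubini-type argument in the transverse \(x''\)-direction, this reduces to estimating the Gagliardo energy of the \(0\)-homogeneous map \(\tilde f(x') \defeq f(x'/\abs{x'})\) on \(\Bset^{k+1}\). The relevant criterion is that \(\tilde f\) lies in \(W^{s,p}(\Bset^{k+1})\) whenever \(sp < k+1\); here \(sp = p-1\), and \(p-1 < k+1\) holds strictly in both the integer and non-integer cases (since either \(k = p-1\) or \(k > p-2\)).

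The core of the argument is then to derive a contradiction from the assumption that some \(U \in W^{1,p}(\Bset^{m-1} \times (0,1), \manifold{N})\) extends \(u\). The strategy is to carve out a \((k+1)\)-dimensional hemisphere surrounding the singular stratum \(\{x' = 0\}\) and extract a homotopical contradiction. Concretely, for almost every \(x''_0\) (in the relevant parameter range) and almost every small \(r > 0\), Fubini together with polar slicing in the variables \((x',t)\) yield that the restriction of \(U\) to the hemisphere
\[
H_{x''_0,r} \defeq \set[\big]{(x',x''_0,t) \st \abs{x'}^2 + t^2 = r^2,\ t \ge 0}
\]
belongs to \(W^{1,p}(H_{x''_0,r}, \manifold{N})\), and its trace on the equator \(\{\abs{x'} = r,\, t = 0\}\) agrees with the trace of \(U\) on \(\Bset^{m-1} \times \{0\}\) restricted to that same equator, namely with a reparametrization of \(f\), a genuine representative of the class \([f] \in \pi_k(\manifold{N})\).

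Since \(\dim H_{x''_0,r} = k+1\) and \(p \ge k+1\), the Sobolev embedding places \(U\restr{H_{x''_0,r}}\) in \(C^0(H_{x''_0,r}, \manifold{N})\) when \(p > k+1\), and in \(\mathrm{VMO}(H_{x''_0,r}, \manifold{N})\) at the critical threshold \(p = k+1\). As \(H_{x''_0,r}\) is a topological \((k+1)\)-ball, this restriction exhibits a (VMO) nullhomotopy of its equatorial boundary values, forcing \([f] = 0\) in \(\pi_k(\manifold{N})\) via the Brezis--Nirenberg theory of homotopy classes for VMO maps, contradicting the choice of \(f\). I expect the main technical obstacle to lie precisely at the critical integer case \(p = k+1\): there the slicing must be combined with the VMO homotopy framework, and one must justify with care that the two a priori distinct notions of boundary trace---as the \((k+2)\)-dimensional trace of \(U\) on \(\{t=0\}\), and as the \((k+1)\)-dimensional trace of \(U\restr{H_{x''_0,r}}\) on its equator---coincide on almost every slice.
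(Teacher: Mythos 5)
Your map is exactly the paper's counterexample \eqref{A1} (since \(\floor{p-1}+1=\floor{p}\), your splitting of \(\Rset^{m-1}\) agrees with the paper's), and the obstruction mechanism you describe --- polar slicing onto \((k+1)\)-dimensional hemispheres plus the Morrey/VMO homotopy argument, with the critical case \(p=\floor{p}\) handled via Brezis--Nirenberg --- is precisely the standard proof that the paper delegates to Hardt--Lin and Bethuel--Demengel. The proposal is correct and takes essentially the same approach.
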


\Cref{theorem_topological_obtsruction} is due to R. \familyname{Hardt} and F.H. \familyname{Lin}  \cite{Hardt_Lin_1987}*{\S 6.3} and F. \familyname{Demengel} and F.  \familyname{Bethuel} \cite{Bethuel_Demengel_1995}*{Theorem 4}.
An equivalent formulation of the above topological obstruction is the following:  there exists a map \(f \in C^1 (\Sset^{\floor{p - 1}}, \manifold{N})\) that cannot be extended continuously to the ball \(\Bset^{\floor{p}}\). Given such an  \(f\), an explicit example of a map \(u\in W^{1 - \sfrac{1}{p}, p} (\Bset^{m - 1}, \manifold{N})\) with no extension \(U\in  W^{1, p} (\Bset^{m - 1}\times (0,1), \manifold{N})\) is given by
\begin{equation}
\label{A1}
u (x', x'') \defeq f (x'/\abs{x'}),\ \forall\,  (x', x'') \in \Rset^{\floor{p}} \times \Rset^{m - 1 - \floor{p}}\text{ such that }(x', x'')\in \Bset^{m - 1}.
\end{equation}

By the above, in the range \(2\le p<m\), a necessary condition for the extension property to hold  is \(\pi_{\floor{p - 1}} (\manifold{N}) \simeq \{0\}\).
 When \(2 \le p < 3\le m\), this condition becomes  \(\pi_1(\manifold{N}) \simeq \{0\}\), i.e,  \(\manifold{N}\) has to be simply connected.
 It turns out that this condition is also sufficient.

\begin{theorem}
  \label{theorem_trace_sufficient_small_p}
  If \(2 \le p < 3 \le m\) and if \(\pi_1 (\manifold{N}) \simeq \{0\}\),
  then  \(W^{1 - \sfrac{1}{p}, p} (\Bset^{m - 1}, \manifold{N})\) has the extension property.
\end{theorem}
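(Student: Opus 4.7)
The strategy is a skeleton-by-skeleton construction on a Whitney-type cubical decomposition of the cylinder \(\Bset^{m - 1} \times (0, 1)\) that refines near the boundary, in the spirit of the Bethuel--Hardt--Lin approach. First, use the classical linear trace theory to produce an initial extension \(V \in W^{1, p} (\Bset^{m - 1} \times (0, 1), \Rset^\nu)\) of \(u\); \(V\) need not take its values in \(\manifold{N}\), and the goal is to rectify it into an \(\manifold{N}\)-valued map \(U\) without altering the boundary trace. Cover the slab by Whitney-type cubes of side \(\sim 2^{-j}\) at height \(\sim 2^{-j}\); after a Fubini averaging over the position of the grid, one may arrange that on most cubes the oscillations of \(V\) on the skeleta are quantitatively controlled by the fractional Gagliardo energy of \(u\).

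On each cube, build \(U\) skeleton by skeleton. At every vertex, assign a value in \(\manifold{N}\) obtained by projecting a local average of \(V\) through the nearest-point projection onto a small tubular neighborhood of \(\manifold{N}\); by connectedness of \(\manifold{N}\), this is well defined on \enquote{good} cubes. Along each edge of the \(1\)-skeleton, connect adjacent vertex values by a minimizing geodesic in \(\manifold{N}\). On each \(2\)-face, the map already constructed on its boundary is a loop in \(\manifold{N}\) which, by the assumption \(\pi_1 (\manifold{N}) \simeq \{0\}\), can be filled by a continuous disk valued in \(\manifold{N}\). On each face \(F\) of dimension \(k \ge 3\), extend homogeneously from the center \(c_F\),
\[
U(y) \defeq U\bigl(c_F + r_F (y - c_F)/\abs{y - c_F}\bigr),
\]
which yields a \(W^{1, p}\) extension with energy bounded by the tangential energy on \(\partial F\), because \(p < 3 \le k\). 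A telescoping over dyadic scales bounds the total \(W^{1, p}\) energy of \(U\) by \(\norm{u}_{W^{1 - \sfrac{1}{p}, p}}^p\), while the few \enquote{bad} cubes, where \(V\) is far from \(\manifold{N}\), have small total measure and can be treated by a separate excision-and-homogeneous-extension argument.

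The main obstacle is the \emph{quantitative} filling of \(2\)-faces: one needs a filling operator \(\Phi\) that sends a loop \(\gamma : \Sset^1 \to \manifold{N}\) to a disk \(\Phi (\gamma) : \Bset^2 \to \manifold{N}\), depends measurably on \(\gamma\), and satisfies a \(W^{1, p}\) energy bound for \(\Phi(\gamma)\) in terms of a suitable tangential energy of \(\gamma\), with constants depending only on \(\manifold{N}\). Topologically, the existence of such \(\Phi\) is ensured by \(\pi_1 (\manifold{N}) \simeq \{0\}\); producing one with the required Sobolev-type estimates requires a compactness argument on the space of loops of bounded energy, patching in local geodesic coordinates of \(\manifold{N}\), and a rescaling to reduce to loops of a fixed diameter. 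Once \(\Phi\) is in hand, the remaining energy estimates follow a by-now classical template.
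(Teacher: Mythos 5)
The paper does not prove this statement: it attributes it to Hardt and Lin and cites \cite{Hardt_Lin_1987}*{Theorem 6.2}. Your plan is essentially a reconstruction of that proof (Whitney decomposition refining toward \(\Bset^{m-1}\times\{0\}\), Fubini choice of a good grid, skeleton-by-skeleton construction, filling of \(2\)-faces via \(\pi_1(\manifold{N})\simeq\{0\}\), homogeneous extension on \(k\)-faces with \(k>p\)), so the approach is the right one. But as written the plan has a genuine gap exactly at the point you flag as ``the main obstacle,'' and the resolution you sketch does not close it.

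The difficulty is one of summability. At height \(\sim 2^{-j}\) there are \(\sim 2^{j(m-1)}\) cubes of side \(2^{-j}\), and a filling of a unit-scale \(2\)-face costing an \emph{additive} constant \(C\) contributes, after rescaling, \(C\,2^{-j(m-p)}\) per cube, hence \(C\sum_j 2^{j(p-1)}=\infty\) in total. A compactness argument on loops of bounded energy can only produce a bound of the form \(C(\Lambda)\) for loops with energy in \([\varepsilon_0,\Lambda]\), i.e.\ precisely an additive constant; it cannot, by itself, yield the homogeneous estimate \(\mathcal{E}^{1,p}(\Phi(\gamma))\le C\,\mathcal{E}^{1,p}(\gamma)\) that you would need if the filling were applied to every cube. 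The way out (and the way Hardt--Lin proceed) is to apply the skeleton construction only on \emph{bad} cubes, namely those on which the linear extension \(V\) is not uniformly close to \(\manifold{N}\) in an averaged sense; on good cubes one simply sets \(U\defeq\Pi\compose V\) with \(\Pi\) the nearest-point projection, and no filling is needed. On a bad cube of scale \(2^{-j}\), the Poincar\'e inequality forces \(V\) to spend energy at least \(c\,\delta^p\,2^{-j(m-p)}\), so by a Chebyshev count the number \(N_j\) of bad cubes at scale \(j\) satisfies \(\sum_j N_j\, 2^{-j(m-p)}\le C\,\delta^{-p}\,\mathcal{E}^{1,p}(V)\le C'\,\mathcal{E}^{1-\sfrac{1}{p},p}(u)\); since the boundary loop of a \(2\)-face of a bad cube is a geodesic interpolation of vertex values and hence has energy bounded by a constant depending only on \(\diam\manifold{N}\), a crude filling costing a fixed constant per bad cube is now summable. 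You should make this good/bad dichotomy and the counting argument explicit, since without it the additive constants destroy the estimate. Two further points need an argument rather than an assertion: (i) that the modified map \(U\) still has trace \(u\) on \(\Bset^{m-1}\times\{0\}\) (this follows from the decay of the energy of \(U-V\) on the dyadic slabs, but must be checked, as \(U\) differs from \(V\) on every slab); and (ii) that the vertex values, geodesic edges and fillings can be glued into a genuine \(W^{1,p}\) map on each cube with the stated tangential-energy bounds, which is where the restriction \(p<3\) (so that homogeneous extension is admissible on all faces of dimension \(\ge 3\)) is actually used.
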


\Cref{theorem_trace_sufficient_small_p} is due to R. \familyname{Hardt} and  F.H. \familyname{Lin} \cite{Hardt_Lin_1987}*{Theorem 6.2}.

\medbreak
Besides the topological obstruction, the extension problem encounters some
\emph{analytical obstructions}.

\begin{theorem}
  \label{theorem_analytical_obstruction}
Assume \(2 \le p < m \). If
\begin{enumerate}[label=\alph*)]
  \item \label{it_uX7Veuch7fai0ri3eive7ki3}
either \(\pi_{\ell} (\manifold{N})\) is infinite for some
  \(\ell \in \{1, \dotsc, \floor{p  - 1}\}\)
  \item \label{it_heeShiechee0theek7iL1Ees}
  or \(p \in \Nset\) and \(\pi_{p - 1} (\manifold{N}) \not \simeq\{0\}\),
\end{enumerate}
then there exists some smooth map \(u\in W^{1-\sfrac{1}{p},p}(\Bset^{m-1}, \manifold{N})\) that has no extension \(U\in W^{1,p}(\Bset^{m-1}\times (0,1), \manifold{N})\). \\
In particular, \(W^{1 - \sfrac{1}{p}, p} (\Bset^{m - 1}, \manifold{N})\) does not have the extension property.
\end{theorem}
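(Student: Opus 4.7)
The plan is to upgrade the singular counterexample from \cref{theorem_topological_obtsruction} to a smooth one by packing infinitely many rescaled, smoothed copies of it at well-chosen scales on $\Bset^{m-1}$. The intuition is a scale-sensitive discrepancy: the fractional Gagliardo energy of a bubble of size $r$ scales like $r^{m-p}$, which is summable once $r_k \to 0$ quickly enough, whereas the minimal $W^{1,p}$-extension energy of a bubble whose topological content approaches that of the singular reference map blows up as the smoothing scale vanishes. Choosing the smoothing scales to shrink faster than the bubble sizes, one can make the Gagliardo trace energy finite while forcing the total extension cost to be infinite. I describe the argument assuming a non-trivial class $[f] \in \pi_{\floor{p-1}}(\manifold{N})$ is available (this is case \ref{it_heeShiechee0theek7iL1Ees}, and also the sub-case of case \ref{it_uX7Veuch7fai0ri3eive7ki3} where the infinite homotopy group occurs at level $\floor{p-1}$); the remaining sub-case of \ref{it_uX7Veuch7fai0ri3eive7ki3} (infiniteness only at some smaller $\ell$) is handled by an analogous construction based on bubbles carrying linked $\ell$-dimensional charges of arbitrarily large magnitude.

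Fix a smooth $f \in C^\infty(\Sset^{\floor{p-1}}, \manifold{N})$ with $[f]\ne 0$, and let $v(x) \defeq f(x'/\abs{x'})$ be the reference map from \eqref{A1}. Construct, for each $\eta>0$, a smooth approximation $w_\eta \in C^\infty(\Bset^{m-1}, \manifold{N})$ by placing the singular core of $v$ at a point of $\partial\Bset^{m-1}$ and regularizing at scale $\eta$ inside $\Bset^{m-1}$ via a \familyname{Bethuel}--\familyname{Demengel}-type dipole construction; we arrange $w_\eta \to v$ in $W^{1-\sfrac{1}{p},p}$ as $\eta \to 0$ with $\mathcal{E}^{1-\sfrac{1}{p},p}(w_\eta)$ uniformly bounded. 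Define
\begin{equation*}
E(\eta) \defeq \inf\set[\big]{\norm{\nabla V}_{L^p}^p \st V\in W^{1,p}(\Bset^{m-1}\times(0,1),\manifold{N}),\ \trace_{\Bset^{m-1}\times\{0\}}V = w_\eta}.
\end{equation*}
The key intermediate claim is that $E(\eta)\to+\infty$ as $\eta\to 0$. Indeed, if $E(\eta_j)$ remained bounded along some $\eta_j\to 0$, $W^{1,p}$-bounded almost-minimizing extensions $V_j$ would subconverge weakly in $W^{1,p}$, by weak compactness, to some $V \in W^{1,p}(\Bset^{m-1}\times(0,1),\manifold{N})$; by weak continuity of the trace operator, $\trace V = \lim \trace V_j = \lim w_{\eta_j} = v$, so that $V$ would be a $W^{1,p}$-extension of $v$, contradicting \cref{theorem_topological_obtsruction}.

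I assemble $u$ as follows. Pick points $x_k \in \Bset^{m-1}$ and radii $r_k>0$ such that the balls $B(x_k,2r_k)$ are pairwise disjoint in $\Bset^{m-1}$ with $\sum_k r_k^{m-p} < \infty$, and then, using the divergence $E(\eta)\to\infty$, select scales $\eta_k$ with $\eta_k/r_k \to 0$ so rapidly that $\sum_k r_k^{m-p} E(\eta_k/r_k) = +\infty$. On each $B(x_k, r_k)$, set $u(x) \defeq w_{\eta_k/r_k}((x-x_k)/r_k)$, extended to the base point outside this disjoint union. By the fractional scaling identity $\mathcal{E}^{1-\sfrac{1}{p},p}(w(\cdot/r)) \lesssim r^{m-p}\,\mathcal{E}^{1-\sfrac{1}{p},p}(w)$, the disjointness of supports, and the uniform Gagliardo bound on $w_\eta$, the map $u$ belongs to $C^\infty(\Bset^{m-1}, \manifold{N}) \cap W^{1-\sfrac{1}{p},p}(\Bset^{m-1},\manifold{N})$.

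For the contradiction, suppose $U \in W^{1,p}(\Bset^{m-1}\times(0,1), \manifold{N})$ extends $u$. Restricting $U$ to each half-ball $\{\abs{x-x_k}^2 + t^2 < r_k^2,\ t>0\}$ and rescaling by $1/r_k$ produces, after a standard cutoff absorbing an error controlled by $\mathcal{E}^{1-\sfrac{1}{p},p}(w_{\eta_k/r_k})$, a $W^{1,p}$-extension of $w_{\eta_k/r_k}$ to the fixed unit slab of energy at least $E(\eta_k/r_k)$. Undoing the rescaling and summing over $k$ yields $\int_{\Bset^{m-1}\times(0,1)} \abs{\nabla U}^p \gtrsim \sum_k r_k^{m-p} E(\eta_k/r_k) = +\infty$, contradicting $U \in W^{1,p}$. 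The main technical obstacle is this localization step: the restriction of $U$ to a half-ball has uncontrolled trace on the curved lateral boundary, and converting it to a genuine extension of the single bubble on the full slab requires a Fubini-type selection of a good hemispherical slice together with a cutoff estimate that does not spoil the divergence of $E(\eta_k/r_k)$.
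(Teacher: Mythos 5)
There is a genuine gap at the foundation of your construction: the claim that one can produce \emph{smooth} maps \(w_\eta\in C^\infty(\Bset^{m-1},\manifold{N})\) with \(w_\eta\to v\) \emph{strongly} in \(W^{1-\sfrac{1}{p},p}\), where \(v\) is the singular map of \eqref{A1}. This is impossible precisely under the hypotheses you are working with: when \(\pi_{\floor{p-1}}(\manifold{N})\not\simeq\{0\}\), smooth maps are not strongly dense in \(W^{1-\sfrac{1}{p},p}(\Bset^{m-1},\manifold{N})\), and \(v\) is the canonical witness of non-density --- its restriction to generic small \(\floor{p-1}\)-spheres linking the singular set lies in a fixed nontrivial homotopy class, and this class is stable under strong \(W^{1-\sfrac{1}{p},p}\) convergence (via the VMO embedding at the critical exponent \(sp=p-1\)), whereas smooth maps restrict to null-homotopic maps on such spheres. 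The good news is that your argument does not actually need strong convergence: the dipole/tube construction yields smooth \(w_\eta\) with \(w_\eta\to v\) almost everywhere and \(\mathcal{E}^{1-\sfrac{1}{p},p}(w_\eta)\) uniformly bounded, and this weak-bounded convergence is enough to pass to the limit in the traces in your compactness argument (\(\trace V_j=w_{\eta_j}\rightharpoonup v\), so the weak limit \(V\) extends \(v\), contradicting \cref{theorem_topological_obtsruction}). You should also note that the ``main technical obstacle'' you flag at the end is not one: since \(E(\eta)\) imposes no condition on the lateral boundary, the restriction of \(U\) to the sub-cylinder \(B(x_k,r_k)\times(0,r_k)\), rescaled by \(1/r_k\), is already an admissible competitor for \(E(\eta_k/r_k)\) with trace \(w_{\eta_k/r_k}\); no good-slice selection or cutoff is needed, and disjointness of the cylinders gives the superadditivity directly. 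Finally, the sub-case of \ref{it_uX7Veuch7fai0ri3eive7ki3} with \(\ell<\floor{p-1}\) is only gestured at; there is then no singular reference map \(v\) to serve as the compactness limit (the analogue of \eqref{A1} based on \(f:\Sset^\ell\to\manifold{N}\) does not even belong to \(W^{1-\sfrac{1}{p},p}\)), so a genuinely different mechanism is required there.

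Once repaired as above, your route differs instructively from the paper's. The final assembly is identical (disjoint balls of radii \(r_k\) with \(\sum_k r_k^{m-p}<\infty\), superadditivity of the extension energy versus almost subadditivity of the Gagliardo energy), but the source of the divergence \(\mathcal{E}^{1,p}_{\mathrm{ext}}(u_k)/\mathcal{E}^{1-\sfrac{1}{p},p}(u_k)\to\infty\) is different. The paper proves the quantitative \cref{theorem_no_linear_bounded}: for \(p>\ell+1\) it uses infinitely many homotopy classes together with a sphere-slicing lower bound and a fractional Gagliardo--Nirenberg inequality to get a superlinear bound \(\mathcal{E}^{1,p}_{\mathrm{ext}}(u_j)\gtrsim[\mathcal{E}^{1-\sfrac{1}{p},p}(u_j)]^{\sfrac{p}{(p-1)}}\); for \(p=\ell+1\) it rescales a single nontrivial bubble and extracts a \(\ln(1/\rho)\) divergence, which requires a delicate VMO-type approximation to handle extensions that are not continuous. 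Your soft compactness argument bypasses all of this quantitative work and reduces case \ref{it_heeShiechee0theek7iL1Ees} (and the \(\ell=\floor{p-1}\) part of \ref{it_uX7Veuch7fai0ri3eive7ki3}) directly to the already-known \cref{theorem_topological_obtsruction}; the price is that it yields no rate of divergence, so it proves \cref{theorem_analytical_obstruction} but cannot give \cref{theorem_no_linear_bounded}, and it does not cover the \(\ell<\floor{p-1}\) case, where the paper's counting of homotopy classes seems unavoidable.
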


\medbreak
\Cref{theorem_analytical_obstruction} \ref{it_uX7Veuch7fai0ri3eive7ki3} is due to F. \familyname{Demengel} and F. \familyname{Bethuel} when \(\ell = 1\) \cite{Bethuel_Demengel_1995}*{Theorem 4} and
to F. \familyname{Bethuel} for a general \(\ell\) \cite{Bethuel_2014}\footnote{The triviality of the groups \(\pi_1 (\manifold{N}), \dotsc, \pi_{\ell - 1} (\manifold{N})\) and the non-triviality of \(\pi_{\ell} (\manifold{N})\) (which are the only explicit assumptions in \cite{Bethuel_2014}) do not imply that \(\pi_{\ell} (\manifold{N})\) is  infinite; see \cref{proposition_manifold_EMcL}. However,  the latter property is used in the construction of maps with arbitrary large topological energy \cite{Bethuel_2014}*{Lemma 2.2}. }.  \Cref{theorem_analytical_obstruction} \ref{it_heeShiechee0theek7iL1Ees} is one of the contributions of the present  work  (see \cref{section_obstr_nonest} below).

The map \(u\) given in \cref{theorem_analytical_obstruction} is not smooth up to the boundary. However, it is  the strong limit of maps smooth up to the boundary, obtained from \(u\) by suitable dilations of the domain.
Note the difference in nature with the counterexample in \eqref{A1}; there, \(u\) has  strong interior singularities in the set  \(\Bset^{m-1}\cap (\{0\}\times\Rset^{m - 1 - \floor{p}})\).

When \(p\) is an integer, \cref{theorem_analytical_obstruction} \ref{it_heeShiechee0theek7iL1Ees} implies that the assumption that \(\pi_{p - 1} (\manifold{N})\) is trivial plays, in the extension problem,  a role even for the strong limits of smooth maps (and is not only required just to have the strong density of smooth maps \citelist{\cite{Brezis_Mironescu_2015}\cite{Bousquet_Ponce_Van_Schaftingen_2014}}).

\medbreak
On the positive side, we have the following result.

\begin{theorem}
  \label{theorem_trace_sufficient_medium_p}
  If \(3 \le p < m\), if \(\pi_1 (\manifold{N})\) is finite and if \(\pi_2 (\manifold{N}) \simeq \dotsb \simeq \pi_{\floor{p - 1}} \simeq \{0\}\),
  then
  \(W^{1 - \sfrac{1}{p}, p} (\Bset^{m - 1}, \manifold{N})\) has the extension property.
\end{theorem}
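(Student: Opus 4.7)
My plan is to reduce to the case of a simply connected target via the universal cover, and then construct the extension through a ``good cube / bad cube'' decomposition that exploits the triviality of $\pi_1, \dotsc, \pi_{\floor{p-1}}$ to repair a mollified ambient extension with energy controlled by the Gagliardo integral.

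\textbf{Reduction to a simply connected target.} Since $\pi_1(\manifold{N})$ is finite, the universal Riemannian cover $\rho : \widetilde{\manifold{N}} \to \manifold{N}$ is a local isometry with compact total space satisfying $\pi_k(\widetilde{\manifold{N}}) \simeq \{0\}$ for every $k \in \{1, \dotsc, \floor{p-1}\}$. Because $\Bset^{m-1}$ is simply connected and $sp = p - 1 \ge 2 > 1$, the standard lifting theory for Sobolev maps into a manifold provides a lift $\widetilde{u} \in W^{1 - \sfrac{1}{p}, p}(\Bset^{m-1}, \widetilde{\manifold{N}})$ of $u$. Since $\rho$ is a local isometry, any extension $\widetilde{U} \in W^{1,p}(\Bset^{m-1} \times (0,1), \widetilde{\manifold{N}})$ of $\widetilde{u}$ yields an extension $U = \rho \compose \widetilde{U}$ of $u$ with the same energy; hence we may assume $\pi_k(\manifold{N}) \simeq \{0\}$ for every $k \in \{1, \dotsc, \floor{p-1}\}$.

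\textbf{Cube decomposition and extension.} Set $V(x,t) \defeq (u * \eta_t)(x)$ for a standard mollifier $\eta_t$. Then $V \in W^{1,p}(\Bset^{m-1} \times (0,1), \Rset^\nu)$ satisfies $\trace V = u$ and $\norm{\Deriv V}_{L^p} \le C \, \mathcal{E}^{1-\sfrac{1}{p},p}(u)^{\sfrac{1}{p}}$, but $V$ is not $\manifold{N}$-valued. At each dyadic scale $t_j = 2^{-j}$, tile the slab $\Bset^{m-1}\times[t_{j+1}, t_j]$ by cubes $Q$ of side comparable to $t_j$. Call $Q$ \emph{good} if the average of $V$ on $Q$ lies in a fixed tubular neighborhood of $\manifold{N}$ and the Poincar\'e-type oscillation of $V$ on $Q$ is small; otherwise call $Q$ \emph{bad}. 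On good cubes, replace $V$ by its composition with the smooth nearest-point projection $\Pi$ onto $\manifold{N}$, at the cost of a bounded multiple of the local energy of $V$. On a bad cube $Q$, fix a generic triangulation and extend inductively skeleton by skeleton from a controlled boundary trace on $\partial Q$, using $\pi_k(\manifold{N}) \simeq \{0\}$ for $k = 1, \dotsc, \floor{p-1}$. On each $\floor{p}$-cell, perform a $0$-homogeneous extension from a single interior singular point: the resulting singular set has codimension strictly greater than $p$, so the extension lies in $W^{1,p}(Q, \manifold{N})$, with energy bounded by a constant times the $p$-th power of the oscillation of $V$ on $Q$. Consistency of boundary data across shared faces, both within a scale and across adjacent dyadic scales, is enforced by choosing the triangulations in a nested fashion.

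\textbf{Energy control and main obstacle.} Summing over all cubes and scales, the key inequality to establish is that the total energy of the corrected map is bounded by a constant times $\mathcal{E}^{1-\sfrac{1}{p},p}(u)$. The contribution of good cubes is controlled directly by the $W^{1,p}$-energy of $V$, and that of bad cubes by summing local oscillations of $V$, whose dyadic aggregation is estimated by the Gagliardo integral. The principal obstacle lies in the bad-cube step: ensuring that the skeleton-by-skeleton extension is compatible with the good-cube projection along common interfaces, and obtaining a sharp bound for the $0$-homogeneous extension at each $\floor{p}$-cell singularity so that the energy is indeed controlled by the local oscillation alone. These combinatorial and quantitative matters are the technical heart of the proof; the lifting step, though nontrivial, falls within well-established Sobolev lifting theory in the regime $sp \ge 1$.
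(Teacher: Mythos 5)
Your overall strategy --- lift \(u\) to the compact universal cover \(\widetilde{\manifold{N}}\), extend there using the simply connected case, and push the extension back down via the local isometry --- is exactly the route the paper takes (following an idea of Bethuel). The genuine gap is in the very first step. You assert that, since \(sp = p-1 \ge 2 > 1\) and \(\Bset^{m-1}\) is simply connected, ``standard lifting theory'' produces \(\widetilde{u} \in W^{1-\sfrac{1}{p},p}(\Bset^{m-1}, \widetilde{\manifold{N}})\). This is not standard, and for non-compact covers the corresponding statement is \emph{false} in precisely the relevant range: for \(0<s<1\) and \(1 \le sp < m-1\) (here \(sp = p-1 < m-1\) since \(p<m\)), there exist maps in \(W^{s,p}(\Bset^{m-1},\Sset^1)\) with no lifting to the universal cover \(\Rset\) \cite{Bourgain_Brezis_Mironescu_2000}. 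Simple connectedness of the domain does not help, because in this range the maps are not VMO. What saves the argument is exactly the compactness of \(\widetilde{\manifold{N}}\) (a consequence of the finiteness of \(\pi_1(\manifold{N})\)), and the existence of a lift --- with the bound \(\mathcal{E}^{1-\sfrac{1}{p},p}(\widetilde{u}) \le C\,\mathcal{E}^{1-\sfrac{1}{p},p}(u)\) needed for the quantitative version --- is the content of the recent theorem on lifting of fractional Sobolev maps to compact covering spaces \cite{Mironescu_VanSchaftingen}, which is the new ingredient the paper's proof hinges on. Your proposal notes that the cover is compact but does not use this where it matters; the lifting step cannot be dispatched by a reference to classical theory.

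For the second half, the paper does not reprove the simply connected case: it invokes \cref{theorem_trace_sufficient_small_p} and the extension estimate of Hardt and Lin \cite{Hardt_Lin_1987}. Your good cube / bad cube decomposition is indeed the skeleton of such a proof, but, as you yourself acknowledge, the compatibility of the skeleton-wise extensions on bad cubes with the projected mollification on good cubes, and the summability of the local oscillations against the Gagliardo energy, are ``the technical heart'' and are left entirely unproven; as written this part is a plan, not a proof. Two smaller points: the homogeneous extension should be performed on cells of dimension at least \(\floor{p}+1 > p\), starting from continuous data on the \(\floor{p}\)-skeleton obtained from the triviality of \(\pi_1,\dotsc,\pi_{\floor{p-1}}\); a \(0\)-homogeneous extension ``on each \(\floor{p}\)-cell'' would place a point singularity in a cell of dimension \(\floor{p}\le p\), which is not in \(W^{1,p}\). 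And the statement that any extension \(\widetilde U\) of \(\widetilde u\) projects to an extension of \(u\) ``with the same energy'' is correct and is the painless part of the argument, exactly as in the paper.
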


\Cref{theorem_trace_sufficient_medium_p} is due to R. \familyname{Hardt} and F.H. \familyname{Lin}  \cite{Hardt_Lin_1987}*{Theorem 6.2} when \(\pi_1 (\manifold{N})\) is trivial. In full generality,  it is proved in the present work  (see \cref{section_extension} below). The proof strongly relies on  an idea of  F. \familyname{Bethuel} \cite{Bethuel_2014}*{Theorem 1.5 (iii)} and uses a very  recent result on the lifting over compact covering spaces \cite{Mironescu_VanSchaftingen}.

\medbreak
Combining \cref{theorem_trace_sufficient,theorem_topological_obtsruction,theorem_trace_sufficient_small_p,theorem_analytical_obstruction,theorem_trace_sufficient_medium_p},
we obtain the following.

\begin{corollary}
\label{A2}
Assume \(m\ge p\).
\begin{enumerate}[label=\arabic*.]
\item
If \(1\le p<2\), then \(W^{1 - \sfrac{1}{p}, p} (\Bset^{m - 1}, \manifold{N})\) has the extension property.
\item
If \(2\le p<3\), then \(W^{1 - \sfrac{1}{p}, p} (\Bset^{m - 1}, \manifold{N})\) has the extension property if and only if \(\pi_1({\manifold N})\simeq\{ 0\}\).
\item
If \(3\le p<4\), then \(W^{1 - \sfrac{1}{p}, p} (\Bset^{m - 1}, \manifold{N})\) has the extension property if and only if \(\pi_1({\manifold N})\) is finite and \(\pi_2 (\manifold{N})\simeq\{ 0\}\).
\end{enumerate}
\end{corollary}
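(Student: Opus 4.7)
This corollary is a bookkeeping consequence of the five preceding theorems: the three intervals of $p$ are set up precisely so that $\floor{p-1}$ is constant on each and the corresponding sufficiency theorem is paired with the corresponding obstruction. My plan is to handle sufficiency and necessity separately in each of $[1,2)$, $[2,3)$, $[3,4)$, treating the subcritical diagonal $m = p$ first.

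For sufficiency, when $m = p$ the extension property holds unconditionally by \cref{theorem_trace_subcritical}. When $m > p$: in case 1, \cref{theorem_trace_sufficient} gives the extension without further hypothesis on $\manifold{N}$; in case 2, \cref{theorem_trace_sufficient_small_p} gives the extension under $\pi_1(\manifold{N}) \simeq \{0\}$; and in case 3, \cref{theorem_trace_sufficient_medium_p} gives the extension under $\pi_1(\manifold{N})$ finite and $\pi_2(\manifold{N}) \simeq \{0\}$, observing that $\floor{p-1} = 2$ throughout $[3,4)$ so that the chain $\pi_2 \simeq \dotsb \simeq \pi_{\floor{p-1}} \simeq \{0\}$ collapses to the single condition $\pi_2(\manifold{N}) \simeq \{0\}$.

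For necessity in the supercritical regime $m > p$, I would invoke the obstruction theorems. In case 2, where $\floor{p-1} = 1$, the topological obstruction \cref{theorem_topological_obtsruction} shows that $\pi_1(\manifold{N}) \not\simeq \{0\}$ rules out extension. In case 3, where $\floor{p-1} = 2$, the same topological obstruction forces $\pi_2(\manifold{N}) \simeq \{0\}$, while \cref{theorem_analytical_obstruction}\ref{it_uX7Veuch7fai0ri3eive7ki3} applied with $\ell = 1$ forces $\pi_1(\manifold{N})$ to be finite: an infinite $\pi_1$ would produce a smooth boundary map with no Sobolev extension.

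I do not anticipate any genuine difficulty; the whole argument is absorbed into the already-cited theorems. The minor care points are just to track the jumps of $\floor{p-1}$ at the endpoints $p=2$ and $p=3$, to note that part \ref{it_heeShiechee0theek7iL1Ees} of \cref{theorem_analytical_obstruction} at $p=3$ only reproduces the topological condition $\pi_2(\manifold{N}) \simeq \{0\}$ already obtained, and to acknowledge that on the diagonal $m = p$ the biconditionals in cases 2 and 3 should be read as asserting that the extension property holds (a direct consequence of \cref{theorem_trace_subcritical}), without further topological condition.
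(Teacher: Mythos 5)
Your proof is correct and is essentially the paper's own argument: the corollary is stated there with no proof beyond the remark that it follows by combining \cref{theorem_trace_sufficient,theorem_topological_obtsruction,theorem_trace_sufficient_small_p,theorem_analytical_obstruction,theorem_trace_sufficient_medium_p}, which is exactly the pairing of sufficiency and obstruction results you carry out. Your side remark about the diagonal is also apt --- the \enquote{only if} directions genuinely need \(m > p\), since \cref{theorem_trace_subcritical} gives the extension property unconditionally when \(m = p\), so the hypothesis \(m \ge p\) in the corollary is best read as \(m > p\) in cases 2 and 3.
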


What happens when  \(4 \le p < m\) (assuming the necessary conditions for the extension property imposed by \cref{theorem_topological_obtsruction} and \cref{theorem_analytical_obstruction}) is  \emph{terra incognita}.

\begin{openproblem}
\label{A3}
Assume \(4 \le p < m\),  \(\pi_1 (\manifold{N}), \dotsc, \pi_{\floor{p - 2}} (\manifold{N})\)  finite and
  \(\pi_{\floor{p - 1}} (\manifold{N})\)  trivial. Does \(W^{1 - \sfrac{1}{p}, p} (\Bset^{m - 1}, \manifold{N})\) have the extension property?  \end{openproblem}

F. \familyname{Bethuel} and F.  \familyname{Demengel} have conjectured that the answer to \cref{A3}  is  positive \cite{Bethuel_Demengel_1995}*{Conjecture 2}. Let us note that there exist manifolds satisfying the assumptions of \cref{A3}  (see \cref{proposition_manifold_EMcL}).

\medbreak
We next turn to the \emph{quantitative form} of the extension problem, more specifically the existence of \(U\) whose energy is controlled in terms of the one of \(u\). Given \(u\in W^{1-\sfrac{1}{p},p}(\Bset^{m-1}, \manifold{N})\), a natural \emph{extension energy} is
\begin{equation}
  \label{eq_cheem1shieThair3m}
\mathcal{E}^{1, p}_\mathrm{ext} (u)
\defeq
\inf\, \set[\bigg]
{
  \int_{\Bset^{m - 1} \times (0, 1)} \abs{\Deriv U}^p
  \st U   \text{ is an extension of }  u
}.
\end{equation}

The next result shows that, under the topological assumptions in \cref{theorem_trace_sufficient},    \cref{theorem_trace_sufficient_small_p} or \cref{theorem_trace_sufficient_medium_p}, the extension energy is controlled linearly.

\begin{theorem}
  \label{theorem_extension_estimates}
  \begin{enumerate}[label=\arabic*.]
\item
If
\begin{enumerate}[label=\alph*)]
\item[a)] either \(1 < p < 2\),
\item[b)] or \(2 \le p < 3\) and \(\pi_1 (\manifold{N})\simeq \{0\}\),
\item[c)] or \(3 \le p < \infty\), \(\pi_1 (\manifold{N})\) is finite and \(\pi_2 (\manifold{N}) \simeq \dotsb \simeq \pi_{\floor{p - 1}} (\manifold{N}) \simeq \{0\}\),
\end{enumerate}
then there exists a constant \(C=C(p, m, \manifold{N})\) such that, for every  mapping \(u \in W^{1 - \sfrac{1}{p}, p} (\Bset^{m - 1}, \manifold{N})\),
\begin{equation*}
\mathcal{E}^{1, p}_{\mathrm{ext}} (u)
\le
C
\mathcal{E}^{1 - \sfrac{1}{p}, p} (u).
\end{equation*}
\item \label{it_quaes1viegheemahvieJohZ2}
If \(p=1\), then
\begin{equation*}
\mathcal{E}^{1, 1}_{\mathrm{ext}} (u)
\le C \left\|u-\fint_{\Bset^{m-1}}u  \right\|_{L^1}.
\end{equation*}
\end{enumerate}

\end{theorem}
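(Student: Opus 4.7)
\smallskip\noindent\textbf{Plan of proof.} For Part 2 ($p=1$), the plan is to follow the $p=1$ case of \cref{theorem_trace_sufficient} sketched in the footnote. First I would produce an $\Rset^\nu$-valued linear extension $V_0\in W^{1,1}(\Bset^{m-1}\times(0,1),\Rset^\nu)$ of $u$ satisfying $\|\nabla V_0\|_{L^1}\le C\|u-\fint_{\Bset^{m-1}}u\|_{L^1}$ (for instance via a convolution with a kernel adapted to the distance from $\Bset^{m-1}\times\{0\}$), and then deform $V_0$ into a map with values in $\manifold{N}$ by a Hardt--Lin cell-by-cell projection on a well-chosen grid. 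Since $p<2$, no topological obstruction arises and the deformation preserves the $L^1$ gradient bound.

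For Part 1, the plan is to make every step of the qualitative proofs of \cref{theorem_trace_sufficient,theorem_trace_sufficient_small_p,theorem_trace_sufficient_medium_p} quantitative. By the classical Gagliardo trace theory applied componentwise, there is a linear extension $V:\Bset^{m-1}\times(0,1)\to\Rset^\nu$ of $u$ (e.g.\ the Poisson extension) with
\[
\int_{\Bset^{m-1}\times(0,1)}|\Deriv V|^p\le C\,\mathcal{E}^{1-\sfrac{1}{p},p}(u).
\]
The core of the argument is to deform $V$ into a $\manifold{N}$-valued map without spoiling this bound, via the Federer--Fleming / Hardt--Lin scheme: decompose $\Bset^{m-1}\times(0,1)$ into cubes whose size scales with distance to the bottom face; by averaging over translations, select a grid on whose $\floor{p}$-skeleton $K$ both $\int_K|\Deriv V|^p$ and the mean distance of $V$ to $\manifold{N}$ remain bounded by a constant times $\mathcal{E}^{1-\sfrac{1}{p},p}(u)$; compose $V\restr{K}$ with the nearest-point projection $\Pi$ onto $\manifold{N}$; and extend the resulting skeleton map cube-by-cube by homogeneous (radial) extension. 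In cases a) and b), the assumed triviality of $\pi_1(\manifold{N}),\dotsc,\pi_{\floor{p-1}}(\manifold{N})$ makes each radial extension well-defined up to cubes of dimension $\floor{p}+1$, and a scaling argument bounds its $L^p$-energy by that of its boundary data, yielding the linear estimate.

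The hard part will be case c), where $\pi_1(\manifold{N})$ may be non-trivial, so that some skeleton values cannot be radially extended across a $2$-cell. Following F. Bethuel's idea \cite{Bethuel_2014}*{Theorem 1.5 (iii)}, my plan is to pass to the universal cover $\widetilde{\manifold{N}}\to\manifold{N}$: since $\pi_1(\manifold{N})$ is finite, $\widetilde{\manifold{N}}$ is a compact simply connected Riemannian manifold with $\pi_k(\widetilde{\manifold{N}})\simeq\pi_k(\manifold{N})\simeq\{0\}$ for $2\le k\le\floor{p-1}$. Using the lifting result over compact coverings \cite{Mironescu_VanSchaftingen}, I would lift $u$ to some $\tilde u\in W^{1-\sfrac{1}{p},p}(\Bset^{m-1},\widetilde{\manifold{N}})$ with $\mathcal{E}^{1-\sfrac{1}{p},p}(\tilde u)\le C\,\mathcal{E}^{1-\sfrac{1}{p},p}(u)$, apply the now-unobstructed scheme in $\widetilde{\manifold{N}}$ to obtain an extension $\widetilde U$ of $\tilde u$ with controlled $L^p$ gradient energy, and finally set $U\defeq\pi\compose\widetilde U$ where $\pi:\widetilde{\manifold{N}}\to\manifold{N}$ is the covering projection. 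Since $\pi$ is a local isometry, $|\Deriv U|=|\Deriv\widetilde U|$ pointwise, and the linear energy bound transfers to $U$. The genuine difficulty (and the reason this result becomes accessible only now) is the need for a quantitative form of the lifting step, which is exactly the content of \cite{Mironescu_VanSchaftingen}.
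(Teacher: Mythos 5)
Your proposal matches the paper's proof: cases a), b) and the case $p=1$ are obtained exactly as you describe, from the quantitative form of the Hardt--Lin extension construction (the paper simply invokes the estimates already contained in the proofs of \cref{theorem_trace_sufficient,theorem_trace_sufficient_small_p}), and the genuinely new case c) is handled, as in \cref{section_extension}, by lifting $u$ to the compact universal cover via the quantitative lifting theorem of \cite{Mironescu_VanSchaftingen}, extending there, and composing with the covering projection, which is a local isometry. The only remark worth adding is that for $1<p<m$ the paper also observes that the linear bound follows with no computation at all from the qualitative existence results together with the nonlinear uniform boundedness principle of \cite{Monteil_Van_Schaftingen_2019}.
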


\Cref{theorem_extension_estimates} is a direct consequence of the estimates resulting from the proofs of \cref{theorem_trace_sufficient,theorem_trace_sufficient_small_p,theorem_trace_sufficient_medium_p}. Note that we do not require \(p<m\). In the range
\(1<p < m\), \Cref{theorem_extension_estimates} follows  without any calculation from the existence results in the above theorems and  an abstract nonlinear uniform boundedness principle due to A. \familyname{Monteil} and J.  \familyname{Van Schaftingen} \cite{Monteil_Van_Schaftingen_2019}*{Theorem 1.1}.

\begin{theorem}
  \label{theorem_no_linear_bounded}
  Assume that \(\ell \in \{1, \dotsc, m - 1\}\). Let \(b\in {\manifold N}\). If
  \begin{enumerate}
    \item[a)] either \(\ell < p - 1\) and \(\pi_\ell (\manifold{N})\) is infinite,
    \item[b)] or \(\ell = p - 1\) and \(\pi_\ell (\manifold{N})\) is nontrivial,
  \end{enumerate}
 then there exists a sequence \((u_j)_{j \in \Nset}\) in \(C^\infty_b (\overline\Bset^{m - 1}, \manifold{N})\)\footnote{\label{c8}Here and in what follows, the subscript \(b\) denotes classes of maps with trace \(b\) on the boundary.}  such that
  \begin{align}
  \label{e5}
    \liminf_{j \to \infty} \mathcal{E}^{1 - \sfrac{1}{p}, p} (u_j) &> 0 &
   &\text{ and }&
    \lim_{j \to \infty} \frac{\mathcal{E}^{1, p}_{\mathrm{ext}} (u_j)}{\mathcal{E}^{1 - \sfrac{1}{p}, p} (u_j)}
     &= \infty.
  \end{align}
\end{theorem}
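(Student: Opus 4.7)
The plan is to build the sequence \((u_j)\) by explicit bubbling: we concentrate non-trivial content of \(\pi_\ell (\manifold{N})\) on small disjoint regions of \(\Bset^{m - 1}\), tuning the scale (and, when needed, the number of bubbles) so that the Gagliardo seminorm stays bounded and bounded below while any extension into \(\manifold{N}\) must pay a divergent topological cost.

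First, pick a smooth representative \(f : \Sset^\ell \to \manifold{N}\) of a non-trivial class of \(\pi_\ell (\manifold{N})\) and construct a smooth model map \(v \in C^\infty_b (\overline{\Bset}^{m - 1}, \manifold{N})\) of Hardt--Lin type: a smoothing of \((x', x'') \mapsto f (x' / \abs{x'})\) on \(\Rset^{\ell + 1} \times \Rset^{m - 2 - \ell}\), cut off to \(b\) near \(\partial \Bset^{m - 1}\) by a dipole-type modification. The integrability \(sp = p - 1 \le \ell < \ell + 1\) guarantees \(v \in W^{1 - \sfrac{1}{p}, p}\). For parameters \(\varepsilon_j, \delta_j \downarrow 0\), set
\[
u_j (x) \defeq
\begin{cases}
v ((x - p_k^j) / \varepsilon_j) & \text{if } x \in B (p_k^j, \varepsilon_j), \\
b                               & \text{otherwise},
\end{cases}
\]
where \(p_1^j, \dotsc, p_{N_j}^j\) are \(N_j \sim \delta_j^{-(m - 1)}\) points in \(\Bset^{m - 1}\) at mutual distance \(\ge \delta_j \ge 2 \varepsilon_j\). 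The homogeneity of the Gagliardo seminorm yields
\[
\mathcal{E}^{1 - \sfrac{1}{p}, p} (u_j) \sim N_j\, \varepsilon_j^{m - p}\, \mathcal{E}^{1 - \sfrac{1}{p}, p} (v)
\]
up to controlled cross-terms, and the parameters are tuned so that this quantity stays between two positive constants (in particular \(N_j = 1\) and \(\varepsilon_j \to 0\) work in the critical case \(m = p\)). In case (a), the infiniteness of \(\pi_\ell (\manifold{N})\) also permits an alternative construction with a single unscaled bubble based on a sequence of classes \([f_j] \in \pi_\ell (\manifold{N})\) whose minimal \(W^{1, p}\)-energy on \(\Sset^\ell\) diverges, in the spirit of \cite{Bethuel_2014}*{Lemma 2.2}.

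The heart of the proof is then a topological lower bound on the extension energy. Given any \(W^{1, p}\)-extension \(U_j : \Bset^{m - 1} \times (0, 1) \to \manifold{N}\) of \(u_j\), one slices \(U_j\) over concentric \(\ell\)-spheres \(\Sigma_r^k\) linking the singular core of the \(k\)-th bubble at scales \(r \in (\varepsilon_j, \delta_j / 2)\), the upper bound on \(r\) ensuring that the slicing cones of distinct bubbles remain disjoint. Since \(p > \ell\), the restriction of \(U_j\) to \(\Sigma_r^k\) lies, for a.e.\ \(r\), in \(W^{1, p} (\Sigma_r^k, \manifold{N}) \subset \mathrm{VMO}\) and hence carries a Brezis--Nirenberg homotopy class; continuity of this class in \(r\) together with comparison with the equatorial slice forces it to equal \([f]\). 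The resulting scaling bound \(\int_{\Sigma_r^k} \abs{\Deriv_{\mathrm{tan}} U_j}^p \dif \mathcal{H}^\ell \ge c\, r^{\ell - p}\), integrated via Fubini in \(r\) and summed over \(k\), yields
\[
\int_{\Bset^{m - 1} \times (0, 1)} \abs{\Deriv U_j}^p \ge c \, N_j \, \Phi_p (\varepsilon_j, \delta_j) \xrightarrow[j \to \infty]{} \infty,
\]
where \(\Phi_p = \log (\delta_j / \varepsilon_j)\) in case (b) (when \(\ell - p = -1\)) and \(\Phi_p \sim \varepsilon_j^{\ell - p + 1}\) in case (a) (when \(\ell - p < -1\)). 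Combined with the boundedness of the Gagliardo seminorm, this proves \eqref{e5}. The main technical obstacle is the slicing argument: one must rigorously justify the VMO stability of the Brezis--Nirenberg degree under weak \(W^{1, p}\)-limits, and choose the linking spheres \(\Sigma_r^k\) transverse to the singular core when the latter has positive dimension \(m - 2 - \ell > 0\); in case (a) one also needs a uniform lower bound on the minimal \(W^{1, p}\)-energy of representatives of the classes \([f_j]\) on \(\Sset^\ell\), which is exactly where the infiniteness assumption on \(\pi_\ell (\manifold{N})\) comes in.
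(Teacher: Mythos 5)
Your proposal contains the right low-level mechanism (slicing an extension over hemispheres, extracting a lower bound \(r^{\ell-p}\) per slice from a topological minimal energy, integrating in \(r\)), and in case b) with \(m=\ell+1\) it essentially reproduces the paper's argument (one shrinking bubble, scale-invariant Gagliardo energy, logarithmically divergent extension energy). But your \emph{primary} construction for case a) cannot work. Under the dilation \(u\mapsto u(\cdot/\lambda)\), both \(\mathcal{E}^{1-1/p,p}\) on \(\Bset^{m-1}\) and \(\mathcal{E}^{1,p}_{\mathrm{ext}}\) scale with the \emph{same} exponent \(\lambda^{m-p}\), and both are (almost) additive over bubbles with disjoint supports. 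Hence for any configuration of rescaled copies of a \emph{fixed} class \([f]\), the ratio \(\mathcal{E}^{1,p}_{\mathrm{ext}}(u_j)/\mathcal{E}^{1-1/p,p}(u_j)\) is bounded above by a constant depending only on \([f]\) (a single smooth compactly supported bubble has finite extension energy when \(p<m\), e.g.\ via \(U(x,t)=v(x/(1-Ct))\), and gluing such extensions gives the matching upper bound). Your exponent bookkeeping (\(\varepsilon_j^{\ell-p+1}\) against \(\varepsilon_j^{m-p}\)) hides this because it conflates the dimension \(\ell\) of the linking sphere with the dimension \(m-1\) of the domain; done correctly the two exponents coincide. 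The divergence in case a) must come from letting the class escape to infinity in \(\pi_\ell(\manifold{N})\) — which is where infiniteness enters — combined with two steps you do not supply: one takes \emph{near-minimizers} \(w_j\) of the \(W^{1,p}\)-energy in each relative class (so that \(\mathcal{E}^{1,p}(w_j)\le 2\,\mathcal{E}^{1,p}_{\mathrm{top}}(w_j)\)), shows \(\mathcal{E}^{1-1/p,p}(w_j)\to\infty\) by Morrey compactness (otherwise infinitely many \(w_j\) would be mutually homotopic), and then invokes the fractional Gagliardo--Nirenberg inequality \(\mathcal{E}^{1-1/p,p}(w)\le C[\mathcal{E}^{1,p}(w)]^{1-1/p}\) to convert the slicing bound \(\mathcal{E}^{1,p}_{\mathrm{ext}}\gtrsim\mathcal{E}^{1,p}_{\mathrm{top}}\) into the superlinear bound \(\mathcal{E}^{1,p}_{\mathrm{ext}}(w_j)\gtrsim[\mathcal{E}^{1-1/p,p}(w_j)]^{p/(p-1)}\). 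Without the interpolation step, divergence of the minimal \(W^{1,p}\)-energies says nothing about the \emph{ratio}, since the fractional energies of arbitrary representatives could diverge just as fast.

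The second gap is the geometry when \(m-1>\ell\). A map \(v\in C^\infty_b(\overline\Bset^{m-1},\manifold{N})\) cannot be \enquote{a smoothing of \(f(x'/\abs{x'})\)} that still restricts to the nontrivial class \([f]\) on linking spheres: removing the singularity while staying in \(\manifold{N}\) is exactly what the topological obstruction forbids, and a genuinely smooth compactly supported bubble carries no degree around any core. Relatedly, you cannot cut a product map \(u(x')\) on \(\Bset^\ell\times(0,1)^{m-\ell-1}\) off to \(b\) near the boundary by a \enquote{dipole-type modification} without null-homotoping \(u\) on the end slices. The device that works is to transport the \(\ell\)-dimensional construction into \(\Bset^{m-1}\) through an embedded solid torus \(\Bset^\ell\times\Sset^{m-\ell-1}\) (the closed cross-section removes the need for any cutoff in the extra variables) and to bound the extension energy from below by Fubini over the \((\ell+1)\)-dimensional fibers. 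Note also that the correct mechanism throughout is the \emph{relative} homotopy class, rel boundary, of the restriction of \(U\) to hemispheres or fiber half-discs compared with the trace \(u\) itself — not a Brezis--Nirenberg degree around a singular set, since all the maps involved are smooth; the VMO machinery is only needed at the critical exponent \(p=\ell+1\), where \(W^{1,p}\) extensions need not be continuous and one must approximate \(U\) by smooth maps before reading off the relative class.
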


\begin{openproblem}
If \(p \ge 4\), and \(\pi_1 (\manifold{N}), \dotsc, \pi_{\floor{p - 2}} (\manifold{N})\) are finite and if
\(\pi_{\floor{p - 1}} (\manifold{N})\) is trivial, does there exist a constant \(C\) such that
for every \(u \in W^{1 - \sfrac{1}{p}, p} (\Bset^{m - 1}, \manifold{N})\), one has
\begin{equation*}
\mathcal{E}^{1, p}_{\mathrm{ext}} (u)
\le
C
\mathcal{E}^{1 - \sfrac{1}{p}, p} (u)?
\end{equation*}
\end{openproblem}

\medskip

In the cases where the trace operator is not surjective, a natural question is to describe the elements in the trace space, in a similar fashion to what has been done in many cases for the strong approximation by smooth maps of Sobolev mappings \citelist{\cite{Bethuel_Coron_Demengel_Helein_1991}\cite{Bethuel_1990}\cite{Riviere_2000}}.

\begin{openproblem}
  Characterize the trace space \(\trace_{\Bset^{m - 1} \times \{0\}}  (W^{1, p} (\Bset^{m - 1}\times (0, 1), \manifold{N}))\).
\end{openproblem}

When either \(1 \le p < 2\) or \(p \ge m\), then by \cref{theorem_trace_subcritical,theorem_trace_sufficient} the trace space is the fractional Sobolev space \(W^{1 - \sfrac{1}{p}, p}(\Bset^{m - 1}, \manifold{N})\).
When \(2 \le p < 3\), a map \(u\)  is in the trace space if and only \(u\) has a \(W^{1-\sfrac 1p,p}\) lifting in the universal covering of \({\manifold N}\).
\footnote{More precisely, let \({\widetilde{\manifold N}}\)  be the universal covering of \({\manifold N}\) and
 \(\pi:\widetilde{\manifold N}\to {\manifold N}\)
 the corresponding covering map.
 Then \(u\in \trace_{\Bset^{m - 1} \times \{0\}}  (W^{1, p} (\Bset^{m - 1}\times (0, 1), {\manifold N}))\) if and only if there exists some \(\varphi\in W^{1-\sfrac 1p, p}(\Bset^{m - 1}, \widetilde{\manifold N})\) such that \(u=\pi\compose\varphi\).
}  (This assertion can be established  by adapting the proof of Theorem 3 in \cite{Bethuel_2014}; see also Section \ref{section_extension}.)  However, currently there is no tractable characterization of the mappings having this property.

\medbreak
A partial result in this direction has been obtained by   B.  \familyname{White} \cite{White_1988}*{Theorem 4.1}, who characterized  maps in  \(\trace_{\Bset^{m-1}\times \{0\}} (W^{1,p}(\Bset^{m-1}\times (0,1), {\manifold N}))\) that are in addition Lipschitz-continuous.

\medbreak
When \(p \in \Nset\), the trace spaces can be characterized by a topological condition on generic skeletons and the boundedness of families of Ginzburg--Landau energies remaining bounded when the order parameter goes to \(0\) \citelist{\cite{Bourgain_Brezis_Mironescu_2004}\cite{Isobe_2003}}; it would be desirable to have a more intrinsic criterion, probably relying on the behaviour of the map on \(\floor{p - 2}\)--dimensional skeletons. In view of the quantitative obstructions to the extension problem \cite{Bethuel_2014}, the condition should be quantitative, in contrast with the more qualitative criteria for the strong approximation by smooth maps.

\medbreak
Up to this point, we have considered the problem of traces on \(\Bset^{m - 1} \times \{0\}\)
of maps on \(\Bset^{m - 1} \times (0, 1)\). More generally, one can consider a manifold \(\manifold{M}\) with a boundary \(\partial \manifold{M}\) and traces on \(\manifold{M}\).

\begin{openproblem}
  When do we have \(\trace_{\partial \manifold{M}}  (W^{1,p} (\manifold{M},\manifold{N})) = W^{1 - \sfrac{1}{p}, p} (\partial \manifold{M}, \manifold{N})\)?
\end{openproblem}

R. \familyname{Hardt}  and F.H. \familyname{Lin}  have proved that this is the case when \(\pi_1 (\manifold{N}) \simeq \dotsb \simeq \pi_{\floor{p - 1}} (\manifold{N}) \simeq \{0\}\) \cite{Hardt_Lin_1987}*{Theorem 6.2}.
On the other hand,
if \(\trace_{\partial \manifold{M}}  (W^{1,p} (\manifold{M},\manifold{N})) = W^{1 - \sfrac{1}{p}, p} (\partial \manifold{M}, \manifold{N})\), then \({\manifold M}\) and \({\manifold N}\) have to satisfy the following topological property: For some arbitrary\footnote{A homotopy equivalence argument shows that the condition does not depend on the triangulation.} triangulation \({\manifold T}\) of the manifold \({\manifold M}\), every \({\manifold N}\)-valued  continuous map on the \(\floor{p - 1}\)--skeleton of \({\manifold T} \cap \partial \manifold{M}\) admits a continuous \({\manifold N}\)-valued extension to the \(\floor{p }\)--skeleton of \({\manifold T}\) (T. \familyname{Isobe} \cite{Isobe_2003}; see also \cite{Bethuel_Demengel_1995}*{Theorem 5}).

\medbreak
The linear trace theory extends to weighted spaces \cite{Uspenskii_1961} (see also \cite{Mironescu_Russ_2015}):
if one sets
\begin{multline*}
W^{1, p}_\gamma (\Bset^{m - 1} \times (0, 1))
\\
=
\set[\bigg]{
  U \in W^{1, 1}_{\mathrm{loc}} (\Bset^{m - 1} \times (0, 1))
  \st \iint_{\Bset^{m - 1}\times (0, 1)} \abs{\Deriv U (x, t)}^p \, t^\gamma \dif t \dif x <\infty
  },
\end{multline*}
then, for \(0<s<1\) and \(1\le p<\infty\), we have
\[
\trace_{\Bset^{m - 1} \times \{0\}}  W^{1, p}_{(1 - s)p - 1} (\Bset^{m - 1} \times (0, 1)) = W^{s, p} (\Bset^{m - 1}).
\]

\begin{openproblem} Assume \(0<s<1\) and \(1\le p<\infty\).
Characterize the manifolds for which one has
\[
\trace_{\Bset^{m - 1} \times \{0\}}  W^{1, p}_{(1 - s)p - 1} (\Bset^{m - 1} \times (0, 1)) = W^{s, p} (\Bset^{m - 1}, \manifold{N}).
\]
\end{openproblem}

Finally, if one considers higher-order Sobolev spaces, the derivatives also have traces. It is known for instance that, for \(1\le p<\infty\), we have
\begin{multline*}
\{(\trace_{\Bset^{m - 1} \times \{0\}}  U, \trace_{\Bset^{m - 1} \times \{0\}}  \partial_m U) \st U \in W^{2, p} (\Bset^{m - 1} \times (0, 1))\}\\
= W^{2 - \sfrac{1}{p}, p} (\Bset^m) \times W^{1 - \sfrac{1}{p}, p} (\Bset^m).
\end{multline*}

\begin{openproblem}
  Characterize the manifolds \(\manifold{N}\) such that
  \begin{multline*}
  \{(\trace_{\Bset^{m - 1} \times \{0\}}  U, \trace_{\Bset^{m - 1} \times \{0\}}  \partial_m U) \st U \in W^{2, p} (\Bset^{m - 1} \times (0, 1), \manifold{N})\}\\
  = W^{2 - \sfrac{1}{p}, p} (\Bset^m, \manifold{N}) \times W^{1 - \sfrac{1}{p}, p} (\Bset^m, \manifold{N}).
\end{multline*}
\end{openproblem}

\section{Obstructions and non-estimates}
\label{section_obstr_nonest}
We first prove \cref{theorem_no_linear_bounded} about the obstruction to linear bounds on the extension energy \(\mathcal{E}^{1, p}_{\mathrm{ext}}\).

\begin{proof}[Towards the proof of \cref{theorem_no_linear_bounded}] \emph{A fundamental lower bound when \(p>\ell+1\).} We explain the main idea in \cite{Bethuel_2014}, that we adapt to the context of our topological assumptions. For the convenience of the reader, we first consider  maps defined on  a cylinder, then we adapt the proof to the case of maps defined on half-balls.

Given mappings \(u, v\in C(\overline\Bset^\ell, {\manifold N})\) we consider the following relative homotopy equivalence:  \(v\sim u\) if and only if there exists some \(H\in C(\overline\Bset^\ell\times [0,1], {\manifold N})\) such that \(H(\cdot, 0)=u\), \(H(\cdot, 1)=v\) and \(H(x,t)=u(x)\), \(\forall\, x\in\Sset^{\ell-1}\), \(\forall\, t\in [0,1]\).

We consider some \(U\in W^{1,p}(\Bset^\ell\times (0,1), {\manifold N})\) such that \(\trace_{\Sset^{\ell-1}\times (0,1)} U=b\)\footnote{Recall that \(b\in {\manifold N}\) is a fixed point.}. Identifying  \(U\) with its continuous representative\footnote{\label{gaga}This is possible, by the Morrey embedding, since \(p>\ell+1\).}, we may assume that \(U\in C(\overline\Bset^\ell\times [0,1], {\manifold N})\). Set \(u(x)\defeq U(x,0)\), \(\forall\,  x\in \overline\Bset^\ell\).
By standard trace theory, we have \(U(\cdot, t)=b\) on \(\Sset^{\ell-1}\), \(\forall\,  t\in [0,1]\), and
\begin{equation*}
  \trace_{\Bset^\ell\times\{0\}}U = U \restr{\Bset^\ell\times \{0\}}\simeq u.
\end{equation*}
It follows that
\begin{equation}
\label{b5}
U(\cdot, \tau)\sim u,\ \forall\, s\in [0,1]
\end{equation}
(through the homotopy \([0,1]\ni t\mapsto U(\cdot, (1-t)\tau)\)).

We next deduce a lower bound for the energy of \(U\) as above.
By the Sobolev-Morrey embedding, we have
\begin{equation}
\label{a1}
W^{1,p}(\Bset^\ell, {\manifold N}) \subset W^{1-\sfrac 1p,p}(\Bset^\ell, {\manifold N})\subset C^{0, 1-\sfrac{(\ell+1)} p}(\overline\Bset^\ell, {\manifold N}).
\end{equation}
On the other hand, by standard trace theory the above \(u\) satisfies  \(u\in W^{1-\sfrac 1p,p}(\Bset^\ell, {\manifold N})\). By \eqref{a1}, the  quantity
\begin{equation*}
  \mathcal{E}^{1, p}_\mathrm{top} (u)
  \defeq \inf
 \{
      \mathcal{E}^{1, p} (v)
        \st
        v \in W^{1, p} (\Bset^\ell, \manifold{N}) \text{ and }v\sim u\}
        \end{equation*}
is meaningful.
Combining \eqref{b5} with the fact that \(U(\cdot, s)\in W^{1,p} (\Bset^\ell, \manifold{N})\) for almost every \(\tau \in (0,1)\), we find that
\begin{equation}
\label{a3}
{\mathcal{E}}^{1,p}(U (\cdot, s))\ge  \mathcal{E}^{1, p}_\mathrm{top} (u)\ \text{ for almost every }\tau \in (0,1).
\end{equation}

We next present an analogue of the above on a half-ball.
Set \(\Bset^{\ell+1}_+\defeq \Bset^{\ell+1}\cap \Rset^{\ell+1}_+\). We define similarly \(\Sset^\ell_+\),  and set
\[
 S_+(0, r)=\{ x\in\Rset^{\ell+1}\st \abs{x}=r\text{ and }x_{\ell+1}>0\}.
\]
Let \(S\) be the South pole of \(\Sset^\ell\) and let \(\Psi\) denote the stereographic projection with vertex \(S\) of \(\Sset^{\ell}\setminus\{S\}\) on \(\Rset^\ell\times\{0\}\simeq\Rset^\ell\). Thus \(\Psi\) maps \(\Sset^\ell_+\) onto \(\Bset^\ell\) and leaves invariant \(\Sset^{\ell-1}\times \{0\}\simeq \Sset^{\ell-1}\). Moreover, \(\Psi\) is a bi-Lipschitz homeomorphism from \(\overline\Sset^\ell_+\) to \(\overline\Bset^\ell\).

Let \(U\in W^{1,p}(\Bset^{\ell+1}_+, {\manifold N})\). 
As above, we may assume that \(U\in C(\overline{\Bset^{\ell+1}_+}, \manifold{N})\). 
We set \(u(x)\defeq U(x,0)\), \(\forall\, x\in \overline\Bset^\ell\) and, for \(0<r\le 1\), \(U^r(x)\defeq U(r\, \Psi^{-1}(x))\), \(\forall\, x\in \overline \Bset^\ell_+\), so that \(U^r\in C(\overline\Bset^\ell, {\manifold N})\).
Assume that the map \(u\) has the property that
\begin{equation}
\label{b1} u(x)=b\text{ for }\rho\le \abs{x} \le1.
\end{equation}
We claim that
\begin{equation}
\label{b2}
U^r\sim u,\ \forall\, \rho\le r\le 1.
\end{equation}
Indeed, on the one hand we have \(U^1\sim u\simeq U(\cdot, 0)\) through the homotopy
\begin{equation*}
H(t, x)=U( t\, \Psi^{-1}(x)+t (x,0)),\ \forall\, x\in\overline\Bset^\ell,\, \forall\, t\in [0,1].
\end{equation*}
On the other hand, we have, for \(\rho\le r, r'\le 1\), \(U^r\sim U^{r'}\) through \(H(\cdot ,t)=U^{t r+(1-t)r'}\), \(\forall\, t\in [0,1]\).\footnote{\label{b4}Here, we use \eqref{b1}.}
Combining this with the definition of \(U^r\), we obtain the following analogue of \eqref{a3}:
\begin{equation}
\label{c1}
\int_{S_+(0, r)}|\nabla_T U|^p\ge \Cl{c2}\, r^{\ell-p}\, {\mathcal{E}}^{1,p}_{\mathrm{top}}(u),\ \text{for almost every }\rho<r<1;
\end{equation}
here, \(\Cr{c2}>0\) is an absolute constant, and \(\nabla_T\) stands for the tangential gradient on  the sphere \(S_+(0, r)\).

Integrating the estimate \eqref{c1}, we find that
\begin{equation}
\label{c3}
\int_{\Bset^{\ell+1}_+}\abs{\Deriv U}^p\ge \Cr{c2}\int_\rho^1 r^{\ell-p}\dif r\, {\mathcal{E}}^{1,p}_{\mathrm{top}}(u).
\end{equation}
Taking into account the fact that \(\Bset^{\ell+1}_+\subset \Bset^\ell\times (0,1)\), \eqref{c3} leads to the following fundamental lower bound.
\begin{equation}
{\mathcal{E}}^{1, p}_{\mathrm{ext}}(u)\ge \Cr{c2}{\mathcal{E}}^{1,p}_{\mathrm{top}}(u)
\int_\rho^1 r^{\ell-p} \dif r\, , \ \forall\, u\in W^{1-\sfrac 1p,p}(\Bset^\ell, {\manifold N})\text{ satisfying \eqref{b1}}.
\qedhere\hfill \qed
\label{c4}
\end{equation}
\end{proof}

\begin{proof}[Proof of \cref{theorem_no_linear_bounded} when   \(p>\ell+1\) and \(m=\ell+1\)]
Since the homotopy group \(\pi_\ell({\manifold N})\) is infinite, there exists a sequence \((v_j)_{j \in \Nset}\) in \(C^\infty (\overline\Bset^\ell, {\manifold N})\) such that each \(v_j\) is constant on \(\Sset^{\ell-1}\) and \(v_j\) is not homotopic with \(v_k\) if \(j\neq k\). Since \({\manifold N}\) is connected and \(v_j\) is constant on \(\Sset^{\ell-1}\), we may assume with no loss of generality that \(v_j=b\) on \(\Sset^{\ell-1}\), \(\forall\, j\). Consider now some  map \(w_j\in C^\infty_b(\overline\Bset^\ell, {\manifold N})\) such that \(w_j\sim v_j\) and
\begin{equation}
\label{c7}
{\mathcal{E}}^{1,p}(w_j)\le 2\,{\mathcal{E}}^{1,p}_{\mathrm{top}}(v_j)=2\,{\mathcal{E}}^{1,p}_{\mathrm{top}}(w_j).
\end{equation}
This is clearly possible, from the definition of \({\mathcal{E}}^{1,p}_{\mathrm{top}}\) and the density of \(C^\infty_b(\overline\Bset^\ell, {\manifold N})\) in \(W^{1,p}_b(\overline\Bset^\ell, {\manifold N})\).\footnote{Recall that \(p>\ell\) and thus \(W^{1,p}(\Bset^\ell, {\manifold N})\subset C(\overline\Bset^\ell, {\manifold N})\). Using this, the  density of \(C^\infty_b(\overline\Bset^\ell, {\manifold N})\) in \(W^{1,p}(\Bset^\ell, {\manifold N})\) is straightforward; see e.g. \cite{bethuelacta}*{Introduction}, and also the proof of
\cref{theorem_no_linear_bounded} when \(p=\ell+1\).
}
We claim that
\begin{equation}
\label{c5}
\lim_{j\to\infty}{\mathcal{E}}^{1-\sfrac 1p,p}(w_j)=\infty.
\end{equation}
Indeed, argue by contradiction and assume that \eqref{c5} does not hold. Using the Morrey type embedding \(W^{1-\sfrac 1p,p}(\Bset^\ell)\subset C^{0, 1-\sfrac {(\ell+1)}p}(\overline\Bset^\ell)\), we find that, up to a subsequence, the sequence \((w_j)_{j \in \Nset}\) converges uniformly on \(\overline \Bset^\ell\), and thus for large \(j\) and \(k\) we have \(w_j\sim w_k\), which is impossible.

We next modify \(w_j\)  by setting
\[u_j(x)\defeq\begin{cases}
w_j(2x),&\text{if }\abs{x}\le \sfrac{1}{2},\\
b,&\text{if } \abs{x}>\sfrac{1}{2},
\end{cases}
\]
and note that \(u_j\sim w_j\sim v_j\).
From the above, \(u_j\) satisfies \eqref{b1} with \(\rho =\sfrac{1}{2}\) and, in addition,
\begin{gather}
\label{d1}
{\mathcal{E}}^{1,p}(u_j)\le \Cl{d2}\,{\mathcal{E}}^{1,p}_{\mathrm{top}}(u_j)
\\
\shortintertext{and}
\label{d3}
\lim_{j\to\infty}{\mathcal{E}}^{1-\sfrac 1p,p}(u_j)=\infty.
\end{gather}
To summarize, if \(\pi_\ell({\manifold N})\) is infinite and \(p>\ell+1\), then there exists a sequence \((u_j)_{j \in \Nset}\) in \(C^\infty_b (\overline\Bset^{\ell}, {\manifold N})\) satisfying
 \eqref{d1}, \eqref{d3} and \eqref{b1} with \(\rho=\sfrac{1}{2}\).

 We next invoke the following  fractional  Gagliardo--Nirenberg type inequality
\begin{equation}
  \label{d5}
  \mathcal{E}^{1, 1 - \sfrac{1}{p}} (u) \le \Cl{d6} [\mathcal{E}^{1, p} (u)]^{1 - \sfrac{1}{p}} \norm{u}_{L^\infty}^{\sfrac 1p}
 \le \Cl{d7}  [\mathcal{E}^{1, p} (u)]^{1 - \sfrac{1}{p}},\ \forall\, u\in W^{1,p}(\Bset^\ell, {\manifold N})
\end{equation}
(see  e.g. \citelist{\cite{Runst_1986}*{Lemma 2.1}\cite{Brezis_Mironescu_2001}*{Corollary 3.2}\cite{Mazya_Shaposhnikova_2002}}).
Combining \eqref{c4}, \eqref{d1} and \eqref{d5}, we obtain the following \emph{superlinear lower bound}
\begin{equation}
\label{d8}
{\mathcal{E}}^{1,p}_{\mathrm{ext}}(u_j)\ge \Cl{d9} [{\mathcal{E}}^{1-\sfrac 1p,p}(u_j)]^{\sfrac p{(p-1)}},\ \forall\, j.
\end{equation}

We obtain the conclusion of the theorem from \eqref{d8} and \eqref{d3}. (Strictly speaking, the mapping \(u_j\) is only Lipschitz-continuous. However, using a standard approximation procedure, we obtain a sequence \((u_j)\subset C^\infty_b (\overline\Bset^{\ell}, {\manifold N})\) such that \eqref{d8} and \eqref{d3} hold.)
\end{proof}

\begin{proof}[Proof of \cref{theorem_no_linear_bounded} when \(p>\ell+1\) and \(m>\ell+1\)] The main idea is to proceed to a dimensional reduction. To illustrate this, consider the maps \(f_j(y, z)\defeq u_j(y)\), \(\forall\, (y,z)\in \Bset^\ell\times (0, 1)^{m-\ell-1}\) (with \(u_j\) as above). Via a Fubini type argument, it is easy to see that
\begin{equation}
\label{e3}
{\mathcal{E}}^{1,p}_{\mathrm{ext}}(f_j)\ge {\mathcal{E}}^{1,p}_{\mathrm{ext}}(u_j).
\end{equation}
On the other hand, by a direct calculation we have
\begin{equation}
\label{e4}
\Cl{e1} \, {\mathcal{E}}^{1-\sfrac 1p,p}(u_j)\le {\mathcal{E}}^{1-\sfrac 1p,p}(f_j)\le \Cl{e2} \, ({\mathcal{E}}^{1-\sfrac 1p,p}(u_j)+1).
\end{equation}
Combining \eqref{e3}--\eqref{e4} with the properties of \(u_j\), we find that \(f_j\) satisfies \eqref{e5}.

However, this \(f_j\) does not equal \(b\) on \(\Sset^{\ell-1}\). In order to obtain a map with this additional property,  we replace, in  the above construction, \((0,1)^{m-\ell-1}\) with a convenient sphere. The main ingredient is the existence of
some \(\Phi\in C^\infty (\overline\Bset^\ell\times \Sset^{m-\ell-1}, \Bset^{m-1})\)
such that \(\Phi\) is a diffeomorphism into its image \(V\). Taking this for granted, we argue as follows. Let \(u_j\) be as above, and set \(g_j(y,z)\defeq u_j (y)\), \(\forall\, y\in \overline\Bset^\ell\), \(\forall\, z\in \Sset^{m-\ell-1}\),  and
\begin{equation*}
h_j(x)\defeq\begin{cases}
g_j(\Phi^{-1}(x)),&\text{if }x\in V\\
b,&\text{if }x\in\overline\Bset^{m-1}\setminus V
\end{cases}.
\end{equation*}
Then \(h_j\in C^\infty_b (\overline
\Bset^{m-1}, {\manifold N})\). By adapting the arguments leading to \eqref{e3} and \eqref{e4}, we find that \((h_j)_{j \in \Nset}\) has the required properties.

It remains to prove the existence of \(\Phi\). Consider, for \(z\in \Sset^{m-\ell-1}\),  the following vectors in \(\Rset^{m-1}\):
\begin{equation*}
X_1\defeq (z/\abs{z},0),\,  X_2\defeq e_{m-\ell+1}\, \ldots,\,  X_{\ell}\defeq e_{m-1}.\footnoteref{f2}
\end{equation*}
\footnotetext[1]{\label{f2}Here, \((e_1,\ldots, e_{m-1})\) is the canonical base of \(\Rset^{m-1}\).}
Then, for sufficiently small \(\varepsilon >0\), the mapping
\begin{equation*}
\overline\Bset^{\ell}\times \Sset^{m-\ell-1} \ni (y,z)\mapsto  \Phi(y,z)\defeq\frac{1}{2}(z,0)+\varepsilon \sum_{k=1}^\ell y_k\, X_k\in \Rset^{m-1}
\end{equation*}
has the required properties.
\end{proof}

\begin{proof}[Proof of \cref{theorem_no_linear_bounded} when \(p=\ell+1\)]
As explained above, it suffices to consider the case \(m=\ell+1\).

\medbreak
If we examine the proof of \eqref{c4}, we see that the following lower bound is valid for any \(p\). If
\begin{equation}
\label{f3}
u\in W^{1-\sfrac 1p,p}(\Bset^{\ell}, {\manifold N})\cap C_b(\overline\Bset^{\ell}, {\manifold N})
\end{equation}
satisfies \eqref{b1}, then
then
\begin{multline}
\label{f4}
\int_{\Bset^{\ell+1}_+}\abs{\Deriv U}^p\ge \Cr{c2}\int_\rho^1 r^{\ell-p}\dif r\, {\mathcal{E}}^{1,p}_{\mathrm{top}}(u),\ \forall\, U\in W^{1,p}(\Bset^{\ell+1}_+, {\manifold N})\cap C(\overline{\Bset^{\ell+1}_+}, {\manifold N})
\\
\text{such that }\trace_{\Bset^{\ell}\times\{0\}}U\simeq u.
\end{multline}

The key observation is that, when \(p=\ell+1\) and \(u\) is, in addition, Lipschitz-continuous,  \eqref{f4} holds even if \(U\) is not supposed continuous, i.e.,
\begin{multline}
\label{f4a}
\int_{\Bset^{\ell+1}_+}\abs{\Deriv U}^{\ell+1}\ge \Cr{c2}{\mathcal{E}}^{1,\ell+1}_{\mathrm{top}}(u)\int_\rho^1 r^{-1}\dif r\, ,\ \forall\, U\in W^{1,\ell+1}(\Bset^{\ell+1}_+, {\manifold N})
\\
\text{such that }\trace_{\Bset^{\ell}\times\{0\}}U\simeq u.
\end{multline}
This is obtained by proving that (under these assumptions on \(p\) and \(u\)) for any map   \(U\in W^{1,p}(\Bset^{\ell+1}_+, {\manifold N})\) such that \(\displaystyle\trace_{\Bset^{\ell}\times\{0\}}U\simeq u\) there exists a sequence \((U_j)_{j \in \Nset}\) of mappings in \(C^\infty (\overline{\Bset^{\ell+1}_+}, {\manifold N})\) such that \(U_j\to U\) strongly in \(W^{1,p}\), \(U_j(\cdot, 0)\sim u\) for large \(u\) and \(U_j(x,0)=b\) if \(\rho\le \abs{x}\le 1\).

\medbreak
Here is a sketch of proof of this fact, well-known to experts and reminiscent from the theory of vanishing mean oscillation (VMO) maps with values into manifolds (see H. \familyname{Brezis} and L. \familyname{Nirenberg} \cite{Brezis_Nirenberg_1995}). First, we extend \(U\) to \(\Bset^{\ell+1}\setminus\Bset^{\ell+1}_+\) by setting \(U(x,t)=u(x)\) if \(t\le 0\). Next, we extend \(U\) by reflexion across \(\Sset^\ell\). We may thus assume that \(U\in W^{1,\ell+1}(B(0, \sfrac{3}{2}), {\manifold N})\). We next consider \(V_j(x,t)\defeq U((1+\sfrac 1j) x, t-\sfrac 1j)\). For large \(j\), \(V_j\) is defined in \(B(0, \sfrac{4}{3})\) and satisfies \(V_j(x,t)=u((1+\sfrac 1j) x)\) if \(\abs{x}\le \sfrac 54\) and \(\abs{t}\le \sfrac 1j\). In addition, we have \(V_j(x,t)=b\) if \( \rho/(1+\sfrac 1j)\le \abs{x}\le 1+\sfrac 1j\) and \(\abs{t}\le \sfrac 1j\). Consider now a standard mollifier \(\zeta\in C^\infty_c(\overline\Bset^{\ell+1}, \Rset)\) and let \(\Pi\) denote the nearest point projection on \({\manifold N}\). Then \(\Pi (V_j\ast\zeta_\varepsilon)\to V_j\) in \(W^{1,\ell+1}(\Bset^{\ell+1}_+, {\manifold N})\) as \(\varepsilon\to 0\) \cite{Brezis_Nirenberg_1995}\footnote{Here, we use the embedding \(W^{1,\ell+1}(\Rset^{\ell+1})\subset \mathrm{VMO}(\Rset^{\ell + 1})\)}. We easily find that, for a suitable sequence \(\varepsilon_j\to 0\), \(U_j\defeq \Pi (V_j\ast\zeta_{\varepsilon_j})\) has all the required properties.
\medbreak
We complete the case \(p=\ell+1\) and \(m=\ell+1\) as follows.
Since \(\pi_{\ell} (\manifold{N}) \not \simeq \{0\}\),  there exists
a map \(v \in C^\infty_b (\overline\Bset^\ell, \manifold{N})\) which such \(v=b\) that near \(\Sset^{\ell-1}\) and \(v\not\sim b\).
We claim that
\begin{equation}
\label{f4b}
{\mathcal{E}}^{1,p}_{\mathrm{top}} (v)>0.
\end{equation}
Indeed, argue by contradiction and assume that there exists a sequence of maps  \((v_j)_{j \in \Nset}\) in \(W^{1, \ell+1}_b(\Bset^\ell, {\manifold N})\) such that \(v_j\sim v\), \(\forall\, j\) and \({\mathcal{E}}^{1,p}(v_j)\to 0\). By the Morrey embedding \(W^{1,\ell+1}(\Bset^\ell)\subset C^{0, 1-\sfrac \ell{(\ell+1)}}(\overline\Bset^\ell)\) and the fact that \(v_j=b\) on \(\Sset^{\ell-1}\), we find that  \(v_j\to b\) uniformly, and thus, for large \(j\), \(v\sim v_j\sim b\), a contradiction.

\medbreak
We define for \(\rho \in (0, 1)\) the  map   \(u_\rho\in C^\infty(\Rset^\ell,  \manifold{N})\) by
\[
u_\rho (x)
\defeq
\begin{cases}
  v (x/\rho), & \text{if \(\abs{x} \le \rho\)}\\
  b, & \text{otherwise}
\end{cases},
\]
whose restriction to \(\Bset^{\ell}\), still denoted \(u_\rho\), satisfies
\begin{equation}
  \label{g3}
  \begin{aligned}
\mathcal{E}^{1 - \sfrac{1}{(\ell+1)}, \ell+1} (u_\rho)&=\iint\limits_{\Bset^{\ell} \times \Bset^{\ell}}
\frac{\abs{u_\rho (y) - u_\rho (x)}^{\ell+1}}{\abs{y - x}^{2\ell}}\dif y \dif x
\\
&\le  \iint\limits_{\Rset^{\ell} \times \Rset^{\ell}}
\frac{\abs{u_\rho (y) - u_\rho (x)}^{\ell+1}}{\abs{y - x}^{2\ell}}\dif y \dif x\\
&= \iint\limits_{\Rset^{\ell} \times \Rset^{\ell}}
\frac{\abs{u (y) - u (x)}^{\ell+1}}{\abs{y - x}^{2\ell}}\dif y \dif x=
\C <\infty.
\end{aligned}
\end{equation}
On the other hand, we clearly have
\begin{equation}
\label{f6}
\lim_{\rho\to 0+}{\mathcal{E}}^{1-\sfrac 1{(\ell+1)}, \ell+1}(u_\rho)=\C.
\end{equation}
Since \(u_\rho\sim v\), we obtain, from \eqref{f4a}--\eqref{g3}, that
\begin{equation}
\label{f7}
{\mathcal{E}}^{1, \ell+1}_{\mathrm{ext}}(u_\rho)\ge \Cl{f8}\, \ln \frac 1\rho\, {\mathcal{E}}^{1-\sfrac 1{(\ell+1)}, \ell+1}(u_\rho).
\end{equation}

We complete the proof of the theorem in this case via \eqref{f6} and \eqref{f7}.
\end{proof}

We now deduce \cref{theorem_analytical_obstruction} from \cref{theorem_no_linear_bounded}.

\begin{proof}[Proof of \cref{theorem_analytical_obstruction}]
  By \cref{theorem_no_linear_bounded} and an extension argument for fractional Sobolev spaces, there exists a sequence of mappings \((u_j)_{j \in \Nset}\) in \(C^\infty (\Rset^{m - 1}, \manifold{N})\) such that for every \(j \in \Nset\), \(u_j = b\) on \(\Rset^{m - 1} \setminus \Bset^{m - 1}\),
  \begin{equation}
  \label{h1}
  \mathcal{E}^{1 - \sfrac{1}{p}, p} (u_j) \ge
  \Cl{g1}\text{ and }
  \mathcal{E}^{1, p}_{\mathrm{ext}} ({u_j}\restr{ \Bset^{m - 1}}) \ge 2^j\, \mathcal{E}^{1 - \sfrac{1}{p}, p} (u_j)
  \end{equation}
  for some some constant \(\Cr{g1} > 0\).
  We fix the radii \(r_j > 0\) by the condition
  \begin{equation}
    \label{h2}
  r_j^{m - p}\, \mathcal{E}^{1, p}_{\mathrm{ext}} (u_j) = 1.
  \end{equation}
  Since, by assumption \(p < m\), we have \(r_j \le \C 2^{-j/(m - p)}\),
  so that, in particular,
  \[
  \sum_{j \in \Nset} r_j^{m - 1} <\infty.
  \]
Therefore, we may find
 some  \(0<\lambda <\infty\) and a sequence of points \((a_j)_{j \in \Nset}\) in \(\Bset^{m - 1}\) converging to a point of \(a_* \in \Sset^{m - 1}\)
  such that the balls \(B (a_j, \lambda r_j)\) are mutually disjoint and contained in \(\Bset^{m - 1}\).
We  then define the map \(u \in C^\infty (\Bset^{m - 1}, \manifold{N})\) by setting, for \(x \in \Bset^{m - 1}\),
  \begin{equation*}
   u (x)
   \defeq
   \begin{cases}
     u_j ( \sfrac1 {(\lambda r_j)} \, (x-a_j)), & \text{if \(x \in B (a_j, \lambda r_j)\) for some \(j \)}\\
     b, & \text{otherwise}
   \end{cases}.
  \end{equation*}

  By the superadditivity of the  extension energy and  \eqref{h2}, we have
\begin{equation}
\label{j1}
\mathcal{E}^{1, p}_{\mathrm{ext}} (u)
\ge \sum_{j \in \Nset} (\lambda r_j)^{m - p}\,  \mathcal{E}^{1, p}_{\mathrm{ext}} (u_j)
\ge \sum_{j \in \Nset} \lambda^{m - p} = \infty.
\end{equation}
On the other hand, we have,  by the almost subadditivity property for Sobolev mappings having disjoint supports \cite{Monteil_Van_Schaftingen_2019}*{Lemma 2.3},  \eqref{h1} and \eqref{h2}:
\begin{equation}
\label{j2}
\begin{aligned}
\mathcal{E}^{1 - \sfrac{1}{p}, p} (u)&\le \Cl{i1} \sum_{j \in \Nset} (\lambda r_j)^{m-p+1}{\mathcal{E}^{1 - \sfrac{1}{p}, p} (u_j)\le \Cr{i1}\, \lambda^{m-p+1}\sum_{j \in \Nset} 2^{-j}\, r_j}\\
&\le \Cr{i1}\, \lambda^{m-p+1}\sum_{j \in \Nset} 2^{-j}<\infty.
\end{aligned}
\end{equation}

By \eqref{j1} and \eqref{j2},  \(u\) is a \(W^{1-\sfrac 1p,p}(\Bset^\ell, {\manifold N})\) map with no \(W^{1,p}(\Bset^\ell, {\manifold N})\) extension.
\end{proof}

\section{Construction of extension}
\label{section_extension}
We explain how \cref{theorem_trace_sufficient_medium_p} and \cref{theorem_extension_estimates}
follow from existing results on extension for simply-connected manifolds through a lifting argument; this important observation is due to F. \familyname{Bethuel} \cite{Bethuel_2014}.

\begin{proof}[Proof of  the new cases in \cref{theorem_trace_sufficient_medium_p} and \cref{theorem_extension_estimates}]
  \resetconstant%
  Let \(\pi : {\widetilde{\manifold{N}}} \to \manifold{N}\) be the universal covering of the manifold \(\manifold{N}\).
  Since the fundamental group \(\pi_1 (\manifold{N})\) is finite, the universal covering space \(\widetilde{\manifold{N}}\) is compact;
  in view of the fractional lifting theorem for compact covering spaces \cite{Mironescu_VanSchaftingen},
  for every \(u \in W^{1-\sfrac{1}{p}, p} (\partial \manifold{M}, {\manifold{N}})\)
  there exists a map \(\widetilde{u} \in W^{1-\sfrac{1}{p}, p} (\partial \manifold{M}, {\widetilde{\manifold{N}}})\) such that \(\pi \compose \widetilde{u} = u\) in \(\manifold{M}\)
  and
  \[
  \mathcal{E}^{1 - \sfrac{1}{p}, p} (\widetilde{u})
  \le
  \Cl{cst_proof_ext_1}\,
  \mathcal{E}^{1 - \sfrac{1}{p}, p} (u),
  \]
  for some constant \(\Cr{cst_proof_ext_1}\) independent on the mapping \(u\).

  Since \({\widetilde{\manifold{N}}}\) is the universal covering of \(\manifold{N}\), it is simply-connected (that is, \(\pi_{1} ({\widetilde{\manifold{N}}}) \simeq\{0\}\)) and it has the same higher-order homotopy groups as \(\manifold{N}\): for every \(j \in \{2, \dotsc, \floor{p - 1}\}\), we have \(\pi_j ({\widetilde{\manifold{N}}}) \simeq \pi_j (\manifold{N}) \simeq \{0\}\) (see for example \cite{Hatcher_2002}*{Proposition 4.1}).
  By \cref{theorem_trace_sufficient_small_p} and \cref{theorem_extension_estimates} \emph{1} (applied to the old case where  \(\pi_1({\manifold N})\simeq\{0\}\)),
  there exists a mapping \(\widetilde{U} \in W^{1, p} (\Bset^{m - 1} \times (0, 1), {\widetilde{\manifold{N}}})\) with trace \(\widetilde{u}\) and such that
  \[
  \mathcal{E}^{1, p} (\widetilde{U})
  \le
  \Cl{cst_proof_ext_2}\,
  \mathcal{E}^{1 - \sfrac{1}{p}, p}(\widetilde{u})
  .
  \]

  We conclude by defining \(U \defeq \pi \compose \widetilde{U}\). Since the covering map \(\pi\) is a local isometry, we have \(u = \pi \compose \widetilde{u}\) on \(\Bset^{m - 1}\) and \(u = \trace_{\Bset^{m - 1} \times \{0\}}  U\), \(U \in W^{1, p} (\Bset^{m - 1} \times (0, 1), \manifold{N})\) and
  \[
  \mathcal{E}^{1, p} (U)
  =
  \mathcal{E}^{1, p} (\widetilde{U})
  \le
  \Cr{cst_proof_ext_2}\,
  \mathcal{E}^{1 - \sfrac{1}{p}, p} (\widetilde{u})
  \le
  \Cr{cst_proof_ext_2}\,
  \Cr{cst_proof_ext_1}\,
  \mathcal{E}^{1 - \sfrac{1}{p}, p} (u)
  .
  \qedhere
  \]
\end{proof}

\section{Manifolds on which the problem is open}

The next proposition shows the existence of compact manifolds with finitely many prescribed homotopy groups.
This is a straightforward and probably well-known variant of the product of Eilenberg--McShane spaces giving
CW complexes with an arbitrary sequence of homotopy groups \cite{Hatcher_2002}*{\S 4.2}.
The interest of the next proposition is that the resulting space is a compact finite-dimensional manifold.

\begin{proposition}
  \label{proposition_manifold_EMcL}
If \(\ell \in \Nset\) and \(G_1, \dotsc, G_\ell\) are finitely generated groups, and if \(G_2, \dotsc, G_n\) are abelian, then
there exists a \(2 (\ell + 1)\)--dimensional compact manifold \(\manifold{N}\)
such that for every \(j \in \{1, \dotsc, \ell\}\), \(\pi_j (\manifold{N}) = G_j\).
\end{proposition}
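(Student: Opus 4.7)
The plan is to realize \(\manifold{N}\) as the boundary of a smooth regular tubular neighborhood of a suitable finite CW complex embedded in Euclidean space. The first and main task is to build, inductively, a finite CW complex \(K\) of dimension at most \(\ell+1\) with \(\pi_j(K)\simeq G_j\) for every \(1\le j\le\ell\). I would take \(K_1\) to be a finite presentation \(2\)-complex of \(G_1\) (reading ``finitely generated'' as ``finitely presented'', which is automatic for the abelian groups \(G_2,\dotsc,G_\ell\) and standard in this context since any compact manifold has a finitely presented fundamental group). Given a finite complex \(K_{j-1}\) of dimension at most \(j\) with the correct \(\pi_i\) for \(i<j\), I would modify it by three moves that only add cells of dimension \(\ge j\), hence do not alter \(\pi_i\) for \(i<j\): \emph{first}, attach finitely many \((j+1)\)-cells along a \(\Zset[G_1]\)-module generating set of \(\pi_j(K_{j-1})\) to kill \(\pi_j\); \emph{second}, wedge on one copy of \(\Sset^j\) for each generator of \(G_j\); \emph{third}, attach \((j+1)\)-cells imposing the defining relations of \(G_j\) together with triviality of the \(G_1\)-action on the newly wedged spheres, so that the resulting \(\pi_j\) is isomorphic to \(G_j\) as an abelian group.

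Setting \(K=K_\ell\), general position yields a smooth embedding \(K\hookrightarrow \Rset^{2\ell+3}\), since \(2(\ell+1)+1=2\ell+3\) is the safe embedding dimension for an \((\ell+1)\)-complex. A smooth regular tubular neighborhood \(V\) of \(K\) is then a compact manifold with boundary of dimension \(2\ell+3\) that deformation retracts onto \(K\), whence \(\pi_j(V)\simeq G_j\) for \(j\le\ell\). I would define \(\manifold{N}\defeq\partial V\), a closed smooth manifold of dimension \(2(\ell+1)\). To finish, I need the inclusion \(\manifold{N}\hookrightarrow V\) to induce isomorphisms on \(\pi_j\) for \(1\le j\le\ell\). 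Via the long exact sequence of the pair, this reduces to showing \(\pi_j(V,\partial V)=0\) for \(j\le\ell+1\); but given a representative \(f\colon(D^j,\Sset^{j-1})\to(V,\partial V)\), the codimension of \(K\) in \(V\) is \((2\ell+3)-(\ell+1)=\ell+2>j\), so general position allows homotoping \(f\) rel \(\Sset^{j-1}\) so that \(f(D^j)\) misses \(K\) entirely, after which the radial deformation retraction of \(V\setminus K\) onto \(\partial V\) pushes \(f\) into \(\partial V\).

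The principal technical obstacle is the first inductive move: it rests on the classical fact that \(\pi_j\) of a finite CW complex is finitely generated as a module over the integral group ring of \(\pi_1\), without which the cell-attaching process at each stage could require infinitely many cells and destroy compactness. A secondary subtlety, addressed in the third move, is that wedging on \(\Sset^j\) produces a \(\Zset[G_1]\)-module of rank one rather than a single abelian generator of \(\pi_j\), so one must explicitly impose triviality of the \(G_1\)-action on the new classes in order to land on the prescribed abelian-group isomorphism class \(G_j\). The dimensional bookkeeping is tight: \(2\ell+3\) is simultaneously the smallest ambient dimension for a generic embedding of an \((\ell+1)\)-complex and for a generic push-off of \((\ell+1)\)-disks off the \((\ell+1)\)-dimensional core, which pins down the target manifold dimension \(2(\ell+1)\) appearing in the statement.
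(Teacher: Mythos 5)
Your second half --- embed the finite \((\ell+1)\)-complex \(K\) in \(\Rset^{2\ell+3}\), thicken, set \(\manifold{N}\defeq\partial V\), and use general position in codimension \(\ell+2\) to get \(\pi_j(\manifold{N})\simeq\pi_j(K)\) for \(j\le\ell\) --- is exactly the paper's argument, with the same dimension count. Where you genuinely diverge is in the production of \(K\): the paper takes the \((\ell+1)\)-skeleton of a finite-type CW model of \(K(G_1,1)\times\dotsb\times K(G_\ell,\ell)\), while you build \(K\) by an explicit kill-and-attach induction. Unfortunately, the inductive step rests on a ``classical fact'' that is false: \(\pi_j\) of a finite CW complex need \emph{not} be finitely generated as a \(\Zset[\pi_1]\)-module. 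The standard counterexample is \(\pi_3(\Sset^1\vee\Sset^2)\): the universal cover is an infinite wedge of \(2\)-spheres, and by Hilton's theorem the Whitehead products \([\iota_i,\iota_j]\) contribute infinitely many \(\Zset[t,t^{-1}]\)-orbits, so \(\pi_3\) is not finitely generated over \(\Zset[\pi_1]\). What your first move actually requires at stage \(j\) is that \(G_1\) be of type \(F_{j+1}\) in the sense of Wall's finiteness conditions; this fails for some finitely presented groups (Stallings, Bieri--Stallings), and for such \(G_1\) the killing step would need infinitely many cells, destroying the compactness on which the whole construction depends. So as written your induction does not go through for general infinite \(G_1\).

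Two mitigating remarks. First, the paper's own proof carries the same hidden hypothesis in a different guise: a \(K(G_1,1)\) of finite type exists only when \(G_1\) is of type \(F_\infty\), and indeed the proposition as literally stated fails for a finitely presented \(G_1\) that is not \(FP_{\ell+1}\) (take \(G_2=\dotsb=G_\ell=\{0\}\); the chain complex of the universal cover of a compact manifold realizing these groups would exhibit \(\Zset\) as having a length-\((\ell+1)\) partial resolution by finitely generated free \(\Zset[G_1]\)-modules). Your reading of ``finitely generated'' as ``finitely presented'' is the right instinct but does not go far enough. Second, in the only cases the paper actually invokes the proposition (\(G_1\) trivial or finite), both arguments are sound: a finite group is of type \(F_\infty\), the universal cover of a finite complex is then finite, and Serre's theorem gives that every \(\pi_j\) is a finitely generated abelian group, so your generating sets exist at every stage. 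If you restrict to \(G_1\) finite (or, more generally, of type \(F_{\ell+1}\)), your construction works and is a legitimate, more hands-on alternative to truncating a product of Eilenberg--MacLane spaces; the paper's route is cleaner precisely because it pushes all the module-finiteness issues into the standard construction of finite-type \(K(G,n)\)'s.
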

\begin{proof}
We define
\[
 X \defeq K (G_1, 1) \times \dotsb \times K (G_\ell, \ell),
\]
where the Eilenberg-McLane space \(K (G_j, j)\) is a CW-complex of finite type whose only non-trivial homotopy group is \(\pi_j (K (G_j, j)) = G_j\) \cite{Hatcher_2002}*{\S 4.2}.
We then have \(\pi_j (X) = G_j\) for every \(j \in \{1, \dotsc, \ell\}\).
Let \(X_{\ell + 1}\) be the component of \(X\) consisting of cells of dimensions at most \(\ell + 1\).
It follows then that \(\pi_j (X_{\ell+1}) = G_j\) for every \(j \in \{1, \dotsc, \ell\}\).
Since \(X\) is of finite type, \(X_{\ell + 1}\) is a finite CW-complex, that can be realized as a simplicial complex \(K\) of dimension \(\ell + 1\).
We embed \(K\) in the Euclidean space  \(\Rset^\nu\) with \(\nu = 2 \ell + 3\) and we let \(\manifold{N} \defeq \partial \mathcal{U}\), where   \(\mathcal{U}\) is a  smooth   neighborhood of \(K\) that retracts on \(K\) and such that \(\mathcal{U} \setminus K\) retracts on \(\manifold{N}\).
Since \(K\) is of dimension \(\ell + 1\), it follows that for every \(j \in \{1, \dotsc, \ell\}\), any continuous  map \(f : \Bset^{j+ 1} \to \manifold{U}\) such that \(f\restr{{\Sset^j}}\) takes its values in \(\manifold{N}\)  is homotopic to a map with values in \(\mathcal{U} \setminus K\),  and thus
\(\pi_j (\manifold{N}) = \pi_j (K) = \pi_j (X_{\ell + 1}) = G_j\).
\end{proof}

\begin{bibdiv}
\begin{biblist}

\bib{Adams_Fournier_2003}{book}{
    author={Adams, Robert A.},
    author={Fournier, John J. F.},
    title={Sobolev spaces},
    series={Pure and Applied Mathematics},
    volume={140},
    edition={2},
    publisher={Elsevier/Academic Press},
    address={Amsterdam},
    date={2003},
    pages={xiv+305},
    isbn={0-12-044143-8},
  }

  \bib{Bethuel_1990}{article}{
    author={Bethuel, Fabrice},
    title={A characterization of maps in \(H^1(\Bset^3,\Sset^2)\) which can be
      approximated by smooth maps},
    journal={Ann. Inst. H. Poincar\'{e} Anal. Non Lin\'{e}aire},
    volume={7},
    date={1990},
    number={4},
    pages={269--286},
    issn={0294-1449},
    doi={10.1016/S0294-1449(16)30292-X},
  }

\bib{bethuelacta}{article}{
author={Bethuel, Fabrice},
title={The approximation problem for {S}obolev maps between two manifolds},
journal={ Acta Math.},
volume={167},
date={1991},
number={3-4},
pages={153-206}
}

  \bib{Bethuel_2014}{article}{
    author={Bethuel, Fabrice},
    title={A new obstruction to the extension problem for Sobolev maps
      between manifolds},
    journal={J. Fixed Point Theory Appl.},
    volume={15},
    date={2014},
    number={1},
    pages={155--183},
    issn={1661-7738},
    doi={10.1007/s11784-014-0185-0},
  }

  \bib{Bethuel_Chiron_2007}{article}{
    author={Bethuel, Fabrice},
    author={Chiron, David},
    title={Some questions related to the lifting problem in Sobolev spaces},
    conference={
      title={Perspectives in nonlinear partial differential equations},
    },
    book={
      series={Contemp. Math.},
      volume={446},
      publisher={Amer. Math. Soc., Providence, R.I.},
    },
    date={2007},
    pages={125--152},
    doi={10.1090/conm/446/08628},
  }

\bib{Bethuel_Coron_Demengel_Helein_1991}{article}{
  author={Bethuel, Fabrice},
  author={Coron, Jean-Michel},
  author={Demengel, Fran\c coise},
  author={H\'{e}lein, Fr\'ed\'eric},
  title={A cohomological criterion for density of smooth maps in Sobolev
    spaces between two manifolds},
  conference={
    title={Nematics},
    address={Orsay},
    date={1990},
  },
  book={
    series={NATO Adv. Sci. Inst. Ser. C Math. Phys. Sci.},
    volume={332},
    publisher={Kluwer Acad. Publ., Dordrecht},
  },
  date={1991},
  pages={15--23},
}

\bib{Bethuel_Demengel_1995}{article}{
   author={Bethuel, Fabrice},
   author={Demengel, Fran\c coise},
   title={Extensions for Sobolev mappings between manifolds},
   journal={Calc. Var. Partial Differential Equations},
   volume={3},
   date={1995},
   number={4},
   pages={475--491},
   issn={0944-2669},
   doi={10.1007/BF01187897},
}
		
\bib{Bourgain_Brezis_Mironescu_2000}{article}{
   author={Bourgain, Jean},
   author={Brezis, Ha\"\i m},
   author={Mironescu, Petru},
   title={Lifting in Sobolev spaces},
   journal={J. Anal. Math.},
   volume={80},
   date={2000},
   pages={37--86},
   issn={0021-7670},
   doi={10.1007/BF02791533},
}

\bib{Bourgain_Brezis_Mironescu_2004}{article}{
   author={Bourgain, Jean},
   author={Brezis, Ha\"\i m},
   author={Mironescu, Petru},
   title={\(H^{\sfrac{1}{2}}\) maps with values into the circle: minimal connections, lifting, and the Ginzburg-Landau equation},
   journal={Publ. Math. Inst. Hautes \'Etudes Sci.},
   number={99},
   date={2004},
   pages={1--115},
   issn={0073-8301},
   doi={10.1007/s10240-004-0019-5},
}

\bib{Bousquet_Ponce_Van_Schaftingen_2014}{article}{
   author={Bousquet, Pierre},
   author={Ponce, Augusto C.},
   author={Van Schaftingen, Jean},
   title={Strong approximation of fractional Sobolev maps},
   journal={J. Fixed Point Theory Appl.},
   volume={15},
   date={2014},
   number={1},
   pages={133--153},
   issn={1661-7738},
   doi={10.1007/s11784-014-0172-5},
}
		
\bib{Brezis_Mironescu_2001}{article}{
  author={Brezis, Ha\"{i}m},
  author={Mironescu, Petru},
  title={Gagliardo--Nirenberg, composition and products in fractional
    Sobolev spaces},
  journal={J. Evol. Equ.},
  volume={1},
  date={2001},
  number={4},
  pages={387--404},
  issn={1424-3199},
  doi={10.1007/PL00001378},
}

\bib{Brezis_Mironescu_2015}{article}{
   author={Brezis, Ha\"\i m},
   author={Mironescu, Petru},
   title={Density in \(W^{s,p}(\Omega;\manifold{N})\)},
   journal={J. Funct. Anal.},
   volume={269},
   date={2015},
   number={7},
   pages={2045--2109},
   issn={0022-1236},
   doi={10.1016/j.jfa.2015.04.005},
}

 \bib{bmbook}{book}{
      author={Brezis, Ha\"\i m},
      author={Mironescu, Petru},
      title={Sobolev maps to the circle},
      note={In preparation},
    }

\bib{Brezis_Nirenberg_1995}{article}{
  author={Brezis, Ha\"\i m},
  author={Nirenberg, Louis},
  title={Degree theory and BMO},
  part={I},
  subtitle={Compact manifolds without boundaries},
  journal={Selecta Math. (N.S.)},
  volume={1},
  date={1995},
  number={2},
  pages={197--263},
  issn={1022-1824},
  doi={10.1007/BF01671566},
}

\bib{diBenedetto_2016}{book}{
  author={DiBenedetto, Emmanuele},
  title={Real analysis},
  series={Birkh\"{a}user Advanced Texts: Basler Lehrb\"{u}cher},
  edition={2},
  publisher={Birk\-h\"{a}user/Springer},
  address={New York},
  date={2016},
  pages={xxxii+596},
  isbn={978-1-4939-4003-5},
  isbn={978-1-4939-4005-9},
  doi={10.1007/978-1-4939-4005-9},
}

\bib{Gagliardo_1957}{article}{
  author={Gagliardo, Emilio},
  title={Caratterizzazioni delle tracce sulla frontiera relative ad alcune
    classi di funzioni in \(n\) variabili},
  journal={Rend. Sem. Mat. Univ. Padova},
  volume={27},
  date={1957},
  pages={284--305},
  issn={0041-8994},
}

\bib{Hardt_Lin_1987}{article}{
   author={Hardt, Robert},
   author={Lin, Fang-Hua},
   title={Mappings minimizing the \(L^p\) norm of the gradient},
   journal={Comm. Pure Appl. Math.},
   volume={40},
   date={1987},
   number={5},
   pages={555--588},
   issn={0010-3640},
   doi={10.1002/cpa.3160400503},
 }

 \bib{Hatcher_2002}{book}{
   author={Hatcher, Allen},
   title={Algebraic topology},
   publisher={Cambridge University Press},
   address={Cambridge},
   date={2002},
   pages={xii+544},
   isbn={0-521-79160-X},
   isbn={0-521-79540-0},
 }

 \bib{Isobe_2003}{article}{
   author={Isobe, Takeshi},
   title={Obstructions to the extension problem of Sobolev mappings},
   journal={Topol. Methods Nonlinear Anal.},
   volume={21},
   date={2003},
   number={2},
   pages={345--368},
   issn={1230-3429},
   doi={10.12775/TMNA.2003.021},
 }

 \bib{Lions_Peetre_1964}{article}{
   author={Lions, Jacques-Louis},
   author={Peetre, Jaak},
   title={Sur une classe d'espaces d'interpolation},
   journal={Inst. Hautes \'{E}tudes Sci. Publ. Math.},
   number={19},
   date={1964},
   pages={5--68},
   issn={0073-8301},
 }

 \bib{Mazya_2011}{book}{
   author={Maz'ya, Vladimir},
   title={Sobolev spaces with applications to elliptic partial differential equations},
   series={Grundlehren der Mathematischen Wissenschaften},
   volume={342},
   edition={2}, 
   publisher={Springer},
   address={Heidelberg},
   date={2011},
   pages={xxviii+866},
   isbn={978-3-642-15563-5},
   doi={10.1007/978-3-642-15564-2},
 }

 \bib{Mazya_Shaposhnikova_2002}{article}{
   author={Maz\cprime ya, Vladimir},
   author={Shaposhnikova, Tatyana},
   title={On the Brezis and Mironescu conjecture concerning a
     Gagliardo-Nirenberg inequality for fractional Sobolev norms},
   journal={J. Math. Pures Appl. (9)},
   volume={81},
   date={2002},
   number={9},
   pages={877--884},
   issn={0021-7824},
   doi={10.1016/S0021-7824(02)01262-X},
 }

 \bib{Mironescu_Russ_2015}{article}{
   author={Mironescu, Petru},
   author={Russ, Emmanuel},
   title={Traces of weighted Sobolev spaces. Old and new},
   journal={Nonlinear Anal.},
   volume={119},
   date={2015},
   pages={354--381},
   issn={0362-546X},
   doi={10.1016/j.na.2014.10.027},
 }

 \bib{Mironescu_VanSchaftingen}{article}{
 author={Mironescu, Petru},
 author={Van Schaftingen, Jean},
 title={Lifting of fractional Sobolev maps to compact covering spaces},
 eprint={arXiv:1907.01373},
 note={submitted for publication},
}

\bib{Monteil_Van_Schaftingen_2019}{article}{
     author={Monteil, Antonin},
     author={Van Schaftingen, Jean},
     title={Uniform boundedness principles for Sobolev maps into manifolds},
     journal={Ann. Inst. H. Poincar\'{e} Anal. Non Lin\'{e}aire},
     volume={36},
     date={2019},
     number={2},
     pages={417--449},
     issn={0294-1449},
     doi={10.1016/j.anihpc.2018.06.002},
}

\bib{Nash_1956}{article}{
  author={Nash, John},
  title={The imbedding problem for Riemannian manifolds},
  journal={Ann. of Math. (2)},
  volume={63},
  date={1956},
  pages={20--63},
  issn={0003-486X},
  doi={10.2307/1969989},
}

\bib{Peetre_1979}{article}{
  author={Peetre, Jaak},
  title={A counterexample connected with Gagliardo's trace theorem},
  journal={Comment. Math. Special Issue},
  volume={2},
  date={1979},
  pages={277--282},
}

\bib{Riviere_2000}{article}{
  author={Rivi\`ere, Tristan},
  title={Dense subsets of \(H^{\sfrac{1}{2}}(\Sset^2,\Sset^1)\)},
  journal={Ann. Global Anal. Geom.},
  volume={18},
  date={2000},
  number={5},
  pages={517--528},
  issn={0232-704X},
  doi={10.1023/A:1006655723537},
}

\bib{Runst_1986}{article}{
  author={Runst, Thomas},
  title={Mapping properties of nonlinear operators in spaces of
    Triebel-Lizorkin and Besov type},
  journal={Anal. Math.},
  volume={12},
  date={1986},
  number={4},
  pages={313--346},
  issn={0133-3852},
  doi={10.1007/BF01909369},
}

\bib{Schoen_Uhlenbeck_1982}{article}{
  author={Schoen, Richard},
  author={Uhlenbeck, Karen},
  title={A regularity theory for harmonic maps},
  journal={J. Differential Geom.},
  volume={17},
  date={1982},
  number={2},
  pages={307--335},
  issn={0022-040X},
}

\bib{Schoen_Uhlenbeck_1983}{article}{
  author={Schoen, Richard},
  author={Uhlenbeck, Karen},
  title={Boundary regularity and the Dirichlet problem for harmonic maps},
  journal={J. Differential Geom.},
  volume={18},
  date={1983},
  number={2},
  pages={253--268},
  issn={0022-040X},
}

\bib{Uspenskii_1961}{article}{
  author={Uspenski\u{\i}, S. V.},
  title={Imbedding theorems for classes with weights},
  language={Russian},
  journal={Trudy Mat. Inst. Steklov.},
  volume={60},
  date={1961},
  pages={282--303},
  issn={0371-9685},
  translation={
    journal={Am. Math. Soc. Transl.},
    volume={87},
    pages={121--145},
    date={1970},
  },
}

\bib{White_1988}{article}{
  author={White, Brian},
  title={Homotopy classes in Sobolev spaces and the existence of energy minimizing maps},
  journal={Acta Math.},
  volume={160},
  date={1988},
  number={1-2},
  pages={1--17},
  issn={0001-5962},
  doi={10.1007/BF02392271},
}

\end{biblist}

\end{bibdiv}

\end{document}